\theoremstyle{plain}
\newtheorem{theorem}{Theorem}[section]
\newtheorem*{theorem*}{Theorem}
\newtheorem{proposition}[theorem]{Proposition}
\newtheorem*{proposition*}{Proposition}
\newtheorem{lemma}[theorem]{Lemma}
\theoremstyle{remark}
\newenvironment{remark*}
  {\pushQED{\qed}\remarkxstar}
  {\popQED\endremarkx}
\definecolor{brightRed}{rgb}{1,0,0}
\definecolor{brightBlue}{rgb}{0,0,1}
\definecolor{myGreen}{rgb}{0,0.7,0}
\newcommand{\tensor}[1]{{\mathfrak{#1}}}
\newcommand{\fm}[1]{#1}
\DeclareMathOperator{\tr}{tr} 
\DeclareMathOperator{\res}{res}
\DeclareMathOperator{\End}{End}
\DeclareMathOperator{\Hom}{Hom}
\DeclareMathOperator{\hox}{\hat\ox}
\DeclareMathOperator{\rank}{rank}
\DeclareSymbolFont{widetriangleaccent}{OMX}{yhex}{m}{n}
\DeclareMathAccent{\widetriangle}{\mathord}{widetriangleaccent}{"E6}
\newcommand{\longhookrightarrow}{\lhook\joinrel\relbar\joinrel\rightarrow}
\def\cent{\mathsf k}
\def\cocent{\mathsf d}
\def\a{\mathfrak{a}}
\def\b{\mathfrak{b}}
\def\c{\mathfrak{c}}
\def\g{\mathfrak{g}}
\def\h{\mathfrak{h}}
\def\n{\mathfrak{n}}
\def\s{\varsigma}
\def\VV{\mathbb{V}}
\def\MM{\mathbb{M}}
\def\Q{q}
\def\ZZ{\mathbb{Z}}
\def\ha{\mbox{\small $\frac{1}{2}$}}
\def\ta{\mbox{\small $\frac{1}{3}$}}
\def\qa{\mbox{\small $\frac{1}{4}$}}
\def\sa{\mbox{\small $\frac{1}{6}$}}
\newcommand{\bb}[1]{[\kern-.1em[ #1 ]\kern-.1em]}%{\llbracket #1 \rrbracket}
\newcommand{\lau}[1]{(\kern-.2em( #1 )\kern-.2em)}%{(( #1 ))}
\newcommand{\lbf}[1]{\langle\kern-.2em\langle #1 \rangle\kern-.2em\rangle}%{(\kern-.2em( #1 )\kern-.2em)}%{(( #1 ))}
\newcommand{\biglbf}[1]{\big\langle \kern-.25em \big\langle #1 \big\rangle \kern-.25em \big\rangle}%{(( #1 ))}
\newcommand{\rbf}[1]{(\kern-.2em( #1 )\kern-.2em)}%{(\kern-.2em( #1 )\kern-.2em)}%{(( #1 ))}
\newcommand{\bigrbf}[1]{\big( \kern-.25em \big( #1 \big) \kern-.25em \big)}%{(( #1 ))}
\def\CC{\mathbb{C}}
\def\P{\mathcal{P}}
\def\M{\mathcal{M}}
\def\sl{\mathfrak{sl}}
\newcommand{\vac}{\left|0\right>}
\newcommand{\vack}{\left|0\right>^{\!k}}
\newcommand{\vackk}{\left|0\right>^{\!\bm k}}
\newcommand{\slMhat}{\widehat{\mathfrak{sl}}_M}
\newcommand{\slM}{\mathfrak{sl}_M}
\newcommand{\be}{\begin{equation}}
\newcommand{\ee}{\end{equation}}
\newcommand{\nn}{\nonumber}
\newcommand{\hc}{M}
\newcommand{\del}{\partial}
\newcommand{\ox}{\otimes}
\newcommand{\nord}[1]{:\! #1 \!:}
\newcommand{\longto}{\longrightarrow}
\newcommand{\8}\infty
\newcommand{\into}{\hookrightarrow}
\newcommand{\longinto}{\longhookrightarrow}
\newcommand{\lsln}{{{}^L\slMhat}}
\newcommand{\homog}{{\textup{hom}}}
\newcommand{\princ}{{\textup{prin}}}
\newcommand{\vacl}{\left|\bm\lambda\right>}
\newcommand{\DD}[2]{D_{#1}^{(#2)}}
\newcommand{\mc}{\mathcal}
\newcommand{\aaa}[1]{\tfrac{2}{\hc} k_{#1}}
\newcommand{\ff}[2]{(#1)_{#2}}
\newcommand{\T}{\omega}
\newcommand{\W}{\mathsf W}
\def\lg{{{}^L\!\g}}
\def\ln{{}^L\n}
\def\lb{{}^L\b}
\def\lh{{}^L\h}
\def\1{\tensor{1}}
\def\2{\tensor{2}}
\def\3{\tensor{3}}
\def\4{\tensor{4}}
\numberwithin{equation}{section}
\author{Sylvain Lacroix}
\address{Univ Lyon, Ens de Lyon, Univ Claude Bernard, CNRS, Laboratoire de Physique, F-69342 Lyon, France}
\email{sylvain.lacroix@ens-lyon.fr}
\author{Beno\^{\i}t Vicedo}
\address{Department of Mathematics, University of York, York YO10 5DD, U.K.}
\email{benoit.vicedo@gmail.com}
\author{Charles Young}
\address{School of Physics, Astronomy and Mathematics, University of Hertfordshire, College Lane, Hatfield AL10 9AB, UK.}
\email{c.a.s.young@gmail.com}
\begin{document} 
\title[Cubic hypergeometric integrals of motion in affine Gaudin models]{Cubic hypergeometric integrals\\ of motion in affine Gaudin models}

\begin{abstract}
We construct cubic Hamiltonians for quantum Gaudin models of affine types $\slMhat$. They are given by hypergeometric integrals of a form we recently conjectured in \cite{LVY1}. We prove that they commute amongst themselves and with the quadratic Hamiltonians. We prove that their vacuum eigenvalues, and their eigenvalues for one Bethe root, are given by certain hypergeometric functions on a space of affine opers. 
\end{abstract}

\maketitle
%\setcounter{tocdepth}{1}
%\tableofcontents

\section{Introduction}
The \emph{quantum Gaudin model} \cite{GaudinBook} can be defined for any symmetrizable Kac-Moody Lie algebra $\g$. One chooses a collection $z_1,\dots, z_N$ of distinct points in the complex plane and the model is defined by its quadratic Hamiltonians, which are the elements
\be \mc H_i = \sum_{\substack{j=1\\j\neq i}}^N \frac{\Xi^{(ij)}}{z_i-z_j},\qquad i=1,\dots,N, \nn\ee
of the (suitably completed) tensor product $U(\g^{\oplus N})$, where $\Xi$ is the canonical element of $\g\ox\g$ coming from the invariant bilinear form on $\g$. In finite types $\g$, these quadratic Hamiltonians are known to belong to a large commutative subalgebra $\mc B \subset U(\g^{\oplus N})$ called the \emph{Bethe} or \emph{Gaudin}  algebra, which is generated by the $\mc H_i$ together with the central elements of $U(\g^{\oplus N})$ and also (when $\rank(\g)>1$) certain families of \emph{higher Gaudin Hamiltonians} \cite{FFR,Fopers,Talalaev, MTV1, MR2424092, MR3004777,RybnikovProof}.

When $\g$ is of affine type, a general construction of the non-local higher Hamiltonians in such affine Gaudin models was given in \cite{FFsolitons}. Moreover, of particular relevance to the present work is the conjecture made in \cite{FFsolitons} that, for $N=2$ and $\g = \widehat{\mathfrak{sl}}_2$, the local higher affine Gaudin Hamiltonians are given by the integrals of motion of the coset Virasoro algebra. The case when $\g$ is of untwisted affine type is of particular interest since it is expected that quantum integrable field theories can be described as affine Gaudin models associated with such Kac-Moody Lie algebras \cite{FFsolitons, V17}. In a recent paper, \cite{LVY1}, we gave a broad conjecture for the form the local higher Hamiltonians should take for $\g$ of untwisted affine type, as well as a precise conjecture for their eigenvalues on tensor products of irreducible highest-weight $\g$-modules. According to \cite{LVY1} both the local higher Hamiltonians and their eigenvalues should be given by integrals of hypergeometric type in the spectral plane (\emph{i.e.} the copy of $\CC$ containing the marked points $z_1,\dots,z_N$). 

In the present paper, we check the simplest case of those conjectures. The prediction is that there is a family of local higher Hamiltonians for each \emph{exponent} of $\g$. Recall that, when $\g$ is of affine type, the exponents are a countably infinite (multi)set of integers whose pattern repeats modulo the Coxeter number (see \emph{e.g.} \cite{KacBook}). The quadratic Hamiltonians are associated with the exponent 1. The next case to check is that of Hamiltonians associated to cubic symmetric invariant tensors on the underlying finite-type Lie algebra. Such tensors in fact exist only in types $\mathsf A_{M-1}$ (\emph{i.e.} $\sl_{M}$) with $M\geq 3$. Equivalently, ${}^1\!\mathsf A_{M-1}$ are the only untwisted affine types for which 2 is an exponent. In this paper we specialize to those types. We construct the cubic Hamiltonians and show that they commute amongst themselves and with the quadratic Hamiltonians. We also show that their eigenvalues are as predicted in \cite{LVY1} at least for Bethe vectors corresponding to 0 or 1 Bethe roots. 

\bigskip

The paper is structured as follows. 

In \S\ref{sec: slMhat} we recall details of the affine algebra $\slMhat$, its local completion and its vacuum Verma module. We also recall very briefly some concepts about vertex algebras. 

In \S\ref{sec: Hams} we define the algebra of observables of the quantum Gaudin model, and define the states $\s_1(z)$ and $\s_2(z)$ used to construct the quadratic and cubic Hamiltonians respectively. The main technical results of the paper are then Theorems \ref{thm: S1S2 zeroth prod} and \ref{thm: gsym}. The former shows that the ${}_{(0)}$th products (in the sense of vertex algebras) between these states vanish modulo certain \emph{twisted derivatives} and \emph{translates}. The latter shows that the same is true of the action of the diagonal copy of $\slMhat$ on $\s_i(z)$, $i=1,2$. 

These statements allow us to prove our main result, Theorem \ref{thm: main}, which establishes that the 
quadratic and cubic Hamiltonians commute amongst themselves and with the diagonal action of $\slMhat$. The Hamiltonians  $\hat Q_i^\gamma$, $i=1,2$, are defined in \eqref{def: Hig}: the superscript $\gamma$ denotes a Pochhammer contour in $\CC\setminus\{z_1,\dots,z_N\}$. 

(There is a slight subtlety because the $\hat Q_1^\gamma$ are not exactly the standard quadratic Hamiltonians $\mc H_j$. Thus, we also show in Theorem \ref{thm: main} that the $\hat Q_i^\gamma$, $i=1,2$, commute with the $\mc H_j$. We do so using another result, Theorem \ref{thm: S1 0 cal Si}.) 

In \S\ref{sec: op and eval} we recall the conjectured eigenvalues for the Hamiltonians   $\hat Q_i^\gamma$, $i=1,2$, from \cite{LVY1}. Namely, the eigenvalues are obtained by putting a certain \emph{affine oper} (coming from an affine \emph{Miura oper}) into \emph{quasi-canonical form}, and then integrating the resulting coefficient functions $v_i(z)$, $i=1,2$, along the same contour $\gamma$. 

In \S\ref{sec: BA} we check that the predicted eigenvalues of the cubic Hamiltonians are correct (for the quadratic Hamiltonians see \cite{LVY1}) for Bethe vectors with $0$ and $1$ Bethe roots, \emph{i.e.} for the vacuum state and for Bethe eigenstates at one step down in the principal gradation. 

Finally, in \S\ref{sec: two-point} we consider the special case of only $N=2$ marked points. In that case we show that our Hamiltonians $\hat Q_i^\gamma$, $i=1,2$ coincide up to rescaling with the zero modes of, respectively, the GKO coset conformal vector $\omega$, \cite{GKO}, and a state $\W$ constructed in \cite{BBSS}. It is known that, after certain further specializations, these generate a copy of the $W_3$ algebra. This provides an interesting arena for checking conjectures about higher Hamiltonians, since higher integrals of motion of the $W_3$ algebra are already known. Our results in this two point case generalise to $\widehat{\mathfrak{sl}}_3$ the corresponding statements made in \cite{FFsolitons} for the local quadratic Hamiltonian of the affine Gaudin model associated with $\widehat{\mathfrak{sl}}_2$.

\subsubsection*{Acknowledgements}
CY is grateful to E. Mukhin for interesting discussions. 
SL thanks F. Delduc and M. Magro for interesting discussions. 
This work is partially supported by the French Agence Nationale de la Recherche (ANR) under grant ANR-15-CE31-0006 DefIS.

\section{Vacuum verma modules for $\slMhat$}\label{sec: slMhat}
\subsection{Loop realization of $\slMhat$} \label{sec: loop rel slM}

We work over $\CC$. Pick and fix an integer $M\geq 3$. 
Let $\slM[t,t^{-1}] = \slM\ox\CC[t,t^{-1}]$ denote the Lie algebra of Laurent polynomials, in a formal variable $t$, with coefficients in the finite-dimensional simple Lie algebra $\slM$. 
The Lie bracket on $\slM[t,t^{-1}]$ is given by $[a \ox f(t), b \ox g(t) ] \coloneqq [a,b] \ox f(t) g(t)$ for any $a, b \in \slM$ and $f(t), g(t) \in \CC[t, t^{-1}]$. Let $(\cdot|\cdot): \slM\times\slM \to \CC$ be the standard bilinear form on $\slM$. It is given by 
\be (X|Y) \coloneqq \tr(XY)\label{def: bf}\ee
where $\tr$ denotes the trace in the defining $M\times M$ matrix representation.
%Equivalently, $(\cdot|\cdot) = \frac{1}{2\hc} (\cdot|\cdot)_K$ where $(\cdot|\cdot)_K$ is the Killing form on $\slM$ and $\hc$ is the dual Coxeter number of $\slM$ (and of $\g$). 
The affine algebra $\slMhat$ is the central extension of $\slM[t,t^{-1}]$ by a one dimensional centre $\CC\cent$,
\be 0 \longto \CC \cent \longto \slMhat \longto \slM[t,t^{-1}] \longto 0, \nn\ee
whose commutation relations are given by $[\cent,\cdot] = 0$ and 
\be [a \ox f(t), b \ox g(t) ] \coloneqq[a,b] \ox f(t) g(t) - (\res_tfdg) (a|b) \cent .\nn\ee
Let $a_n \coloneqq a\ox t^n$ for $a\in \slM$ and $n\in \ZZ$. The commutation relations can equivalently be written as
\be [a_m, b_n ] = [a,b]_{n+m} - n \delta_{n+m,0} (a|b) \cent .\nn\ee
Define the Lie algebra 
\be \g \coloneqq \slMhat \oplus \CC\cocent,\nn\ee
by declaring that the \emph{derivation element} $\cocent$ obeys $[\cocent,\cent]=0$ and $[\cocent, a\otimes f(t)] = a\otimes t\del_t f(t)$ for all $a \in \slM$ and $f(t) \in \CC[t,t^{-1}]$.
%The element $\cocent$ is the \emph{derivation element} of $\g$ corresponding to the \emph{homogeneous gradation} of $\slMhat$. 
\subsection{Kac-Moody data}\label{sec: uaa}
Recall, for example from \cite{KacBook}, that the Lie algebra $\g$ is isomorphic to the Kac-Moody algebra over $\CC$ of type ${}^1\!\mathsf A_{M-1}$. The Cartan matrix is $A\coloneqq (a_{ij})_{i,j=0}^{M-1} = ( 2\delta_{ij} - \delta_{i+1,j} - \delta_{i-1,j} )_{i,j=0}^{M-1}$, where addition of indices is modulo $M$. Fix a Cartan decomposition $\g = \n_- \oplus \h \oplus \n_+$ and sets of Chevalley-Serre generators $\{e_i\}_{i=0}^{M-1}\subset \n_+$, $\{f_i\}_{i=0}^{M-1}\subset \n_-$. 
Let $\{\check\alpha_i\}_{i=0}^{M-1}\subset \h$ be the simple coroots of $\g$ and $\{\alpha_i\}_{i=0}^{M-1}\subset \h^*$ the simple roots. They obey $a_{ij} = \langle \alpha_j, \check \alpha_i\rangle$ for $i,j\in \{0,1,\dots,{M-1}\}$, where $\langle \cdot,\cdot\rangle : \h^* \times \h \to \CC$ is the canonical pairing of the Cartan subalgebra $\h$ and its dual $\h^*$. 
The defining relations of $\g$ are then
\begin{subequations} \label{KM relations}
\begin{alignat}{2}
\label{KM rel a} [x, e_i] &= \langle \alpha_i,x\rangle  e_i, &\qquad
[x,  f_i] &= - \langle \alpha_i,x\rangle f_i, \\
\label{KM rel b} [x, x'] &= 0, &\qquad
[e_i, f_j] &= \check\alpha_i \delta_{i,j},
\end{alignat}
for any $x, x' \in \h$ and $i,j \in \{ 0, 1, \ldots, M-1 \}$, together with the Serre relations
\begin{equation} \label{KM rel c}
(\text{ad}\, e_i)^{1- a_{ij}} e_j = 0, \qquad (\text{ad}\, f_i)^{1- a_{ij}} f_j = 0.
\end{equation}
\end{subequations}
The centre of $\g$ is one dimensional and spanned by the \emph{central element} $\cent \coloneqq \sum_{i=0}^{M-1}\check \alpha_i$.
%The set of \emph{imaginary roots} is generated by  
%\be \delta \coloneqq   \sum_{i=0}^{M-1} \alpha_i. \nn\ee
The Cartan subalgebra has a basis consisting of the simple coroots $\{\check\alpha_i\}_{i=0}^{M-1}$ together with  the derivation element $\cocent$, which obeys
\be \langle \alpha_i, \cocent \rangle = \delta_{i,0}. \nn\ee
(This condition fixes $\cocent$ uniquely up to the addition of a multiple of $\cent$.) 

The matrix $(a_{ij})_{i,j=1}^{M-1}$ obtained by removing the zeroth row and column of $A$ is the Cartan matrix of finite type $\mathsf A_{M-1}$, with $\slM$ the corresponding finite-dimensional simple Lie algebra. We can identify $\slM$ with the subalgebra of $\g$ generated by $\{e_i\}_{i=1}^{M-1}\subset \n_+$ and $\{f_i\}_{i=1}^{M-1}\subset \n_-$, and then the defining relations are as above.
Let $\dot\h \coloneqq \textup{span}_\CC \{\check\alpha_i\}_{i=1}^{M-1}\subset \slM$ denote its Cartan subalgebra and $\dot\h^* = \textup{span}_\CC \{\alpha_i\}_{i=1}^{M-1}$ its dual.
%
%\subsection{Weights and coweights}
%There exists a unique set $\{\Lambda_i\}_{i=0}^{M-1} \subset \h^*$, the fundamental weights of $\g$ relative to our choice of $\cocent$, such that 
%\begin{equation} \langle \Lambda_i, \cocent \rangle = 0\qquad\text{and}\qquad \langle \Lambda_i, \check\alpha_j \rangle = \delta_{i,j},\qquad i,j\in \{0,1,\dots,{M-1}\}.\label{def: Lambda} \end{equation}
%There are then a unique set $\{ \check\Lambda_i \}_{i=0}^{M-1} \subset \h$, the fundamental coweights of $\g$ such that
%\begin{equation} \langle \Lambda_0, \check\Lambda_i \rangle = 0\qquad\text{and}\qquad \langle \alpha_i, \check\Lambda_j \rangle = \delta_{i,j},\qquad i,j\in \{0,1,\dots,{M-1}\}.\label{def: co Lambda} \end{equation}
%In particular, $\check\Lambda_0 = \cocent$.
%
Let $\{\omega_i\}_{i=1}^{M-1} \subset \dot\h^*$ and $\{\check\omega_i\}_{i=1}^{M-1} \subset \dot \h$ be respectively the fundamental weights and coweights of $\slM$.

\subsection{Local completion and vacuum Verma module}
For any $k\in \CC$, let $U_k(\slMhat)$ denote quotient of the enveloping algebra of $\slMhat$ by the two-sided ideal generated by $\cent - k$. For each $n\in \ZZ_{\geq 0}$ define the left ideal $J_n\coloneqq U_k(\slMhat) \cdot (\slM \ox t^n\CC[t])$. 
That is, $J_n$ is the linear span of monomials of the form $a_p\dots b_q c_r$ with $r\geq n$, $c \in \slM$, and some non-negative number of elements $a,\dots,b \in\slM$ and mode numbers $p,\dots,q\in \ZZ$.
%The $\{J_n\}_{n=1}^\8$ form a nested sequence of left ideals in $U_k(\slMhat)$ whose intersection is $\{0\}$. 
%In the topology on $U_k(\slMhat)$ in which these ideals are a basis for the open sets containing $0$, the operation of multiplication is continuous. So 
The inverse limit $\tilde U_k(\slMhat) \coloneqq \varprojlim U_k(\slMhat) \big/ J_n$ 
is a complete topological algebra, called the \emph{local completion of $U(\slMhat)$ at level $k$}. By definition, elements of $\tilde U_k(\slMhat)$ are (possibly infinite) sums $\sum_{m\geq 0} X_m$ of elements $X_m\in U_k(\slMhat)$ which truncate to finite sums when one works modulo any $J_n$, \emph{i.e.} for every $n$, $X_m\in J_n$ for all sufficiently large $m$.

A module $\M$ over $\slMhat$ is called \emph{smooth} if, for all $a \in \slM$ and all $v\in \M$, $a_nv=0$ for all sufficiently large $n$. A module $\M$ has \emph{level $k$} if $\cent-k$ acts as zero on $\M$. Any smooth module of level $k$ over $\slMhat$ is also a module over $\tilde U_k(\slMhat)$.  

We have the Lie subalgebra $\slM[t] \oplus \CC \cent \subset \slMhat$. Let $\CC \vack$ denote the one-dimensional representation of this Lie algebra given by $(\cent-k) \vack=0$ and $a_n \vack = 0$ for all $n\geq 0$ and all $a\in \slM$. Let $\VV_0^k$ denote the induced $\slMhat$-module:
\be \VV_0^k = U(\slMhat) \ox_{U(\slM[t] \oplus \CC \cent)} \CC \vack. \nn\ee
This module $\VV_0^k$ is called the \emph{vacuum Verma module of level $k$}. 
It is a smooth module. Concretely, $\VV_0^k$ is the linear span of vectors of the form $a_p \dots b_q \vack$ with $a,\dots,b\in \slM$ and strictly negative mode numbers $p,\dots,q\in \ZZ_{\leq -1}$. Elements of $\VV_0^k$ are called \emph{states}.

Let $[T,\cdot]$ be the derivation on $U_k(\slMhat)$ defined by $[T, a_n] \coloneqq -n a_{n-1}$ and $[T, 1] \coloneqq 0$. By setting $T(X\vack) \coloneqq [T, X] \vack$ for any $X \in U_k(\slMhat)$, one can then regard $T$ also as a linear map  $\VV_0^k\to \VV_0^k$, called the \emph{translation operator}.

\subsection{Vertex algebra structure}

For every state $A\in \VV_0^k$ and every $n\in \ZZ$, there is an element $A_{(n)}\in \tilde U_k(\slMhat)$, the \emph{$n^{\rm th}$ formal mode of $A$}. These modes can be arranged into a formal power series
\be Y[A, u] \coloneqq \sum_{n\in \ZZ} A_{(n)} u^{-n-1} \label{def: Yformal}\ee
where $Y[\cdot, u]$ is called the \emph{formal state-field map}. Their definition is as follows.

First, if $A = a_{-1} \vack$ for some $a\in \slM$ then $A_{(n)} \coloneqq a_n$ for all $n\in \ZZ$, \emph{i.e.}
\be Y[a_{-1} \vack, u] \coloneqq \sum_{n\in \ZZ} a_n u^{-n-1}.\nn\ee 
Next, for all states $A,B\in \VV_0^k$,
\be Y[TA, u] \coloneqq \del_u Y[A, u]\qquad\text{and}\qquad  Y[A_{(-1)} B, u] \coloneqq\,\,\, \nord{Y[A, u] Y[B, u]} \nn\ee
where 
\be \nord{Y[A,u] Y[B,u]} \,\,\, \coloneqq \Bigg(\sum_{m<0} A_{(m)} u^{-m-1} \Bigg)Y[B,u] + Y[B,u] \Bigg(\sum_{m\geq 0} A_{(m)} u^{-m-1}\Bigg)\label{nord}\ee 
is called the \emph{normal ordered product}. These assignments together recursively define $Y[C, u]$ for all $C\in \VV_0^k$ (by writing $C = a_{-n} B$ for some $a\in \slM$, $n\in \ZZ_{\geq 1}$ and $B\in \VV_0^k$).  

The \emph{state-field map} $Y(\cdot,u) : \VV_0^k \to \Hom(\VV_0^k, \VV_0^k((u)))$ is obtained from $Y[A, u]$ by sending each formal mode to its image in $\End(\VV_0^k)$. This map $Y(\cdot, u)$ obeys a collection of axioms that make $\VV_0^k$ into a \emph{vertex algebra}; see \emph{e.g.} \cite{FrenkelBenZvi}. 
%When $k \neq - h^\vee$ it is also a \emph{conformal vertex algebra}. 

\section{Quadratic and cubic Hamiltonians}\label{sec: Hams}
  
\subsection{The algebra of observables} \label{sec: alg obs}
Let $\bm k \coloneqq (k_i)_{i=1}^N$ be a collection of $N \in \ZZ_{\geq 1}$ complex numbers $k_i \neq - \hc$ for $i = 1, \ldots, N$. Consider the tensor product
\be \VV_{0}^{\bm k} \coloneqq \VV_0^{k_1} \ox \dots \ox \VV_0^{k_N} \nn\ee
of vacuum Verma modules. We can regard it as a module over the direct sum $\slMhat^{\oplus N}$ of $N$ copies of $\slMhat$. 
Let $A^{(i)}\in \slMhat^{\oplus N}$ denote the copy of $A\in \slMhat$ in the $i^{\rm th}$ direct summand.
Let $\CC \vackk$ denote the one-dimensional representation of the Lie subalgebra $(\slM[t] \oplus \CC \cent)^{\oplus N} \subset \slMhat^{\oplus N}$ defined by  $(\cent^{(i)}-k_i) \vackk=0$ and $a^{(i)}_n \vack = 0$, for all $n\geq 0$, all $a \in \slM$, and all $i\in \{1,\dots,N\}$. 
Then $\VV_0^{\bm k}$ is the induced $\slMhat^{\oplus N}$-module:
\be \VV_0^{\bm k} = U(\slMhat^{\oplus N}) \ox_{U( (\slM[t] \oplus \CC \cent)^{\oplus N})} \CC \vackk. \nn\ee
Let $U_{\bm k}(\slMhat^{\oplus N})$ denote the quotient of $U(\slMhat^{\oplus N})$ by the two-sided ideal generated by $\cent^{(i)} - k_i$ for all $i\in\{1,\dots,N\}$. We have the isomorphism 
\be U_{\bm k}(\slMhat^{\oplus N}) \cong  U_{k_1}(\slMhat) \ox \dotsm \ox  U_{k_N}(\slMhat),\nn\ee
which sends $A^{(i)} \in \slMhat^{\oplus N} \subset  U_{\bm k}(\slMhat^{\oplus N})$ to $1^{\ox i-1} \ox A \ox 1^{\ox N-i}$. 
Let $J^N_n$ denote the left ideal in $U_{\bm k}(\slMhat^{\oplus N})$ generated by $a_r^{(i)}$ for all $r \geq n$, $a\in \slM$ and all $i\in\{1,\dots,N\}$. Let $\tilde U_{\bm k}(\slMhat^{\oplus N}) \coloneqq \varprojlim U_{\bm k}(\slMhat^{\oplus N})\big/J^N_n$ denote the inverse limit. It is a complete topological algebra, and 
\be \tilde U_{\bm k}(\slMhat^{\oplus N}) \cong 
\tilde U_{k_1}(\slMhat) \hox \dotsm \hox \tilde U_{k_N}(\slMhat),\nn\ee
where $\hox$ denotes the completed tensor product. We call $\tilde U_{\bm k}(\slMhat^{\oplus N})$ the \emph{algebra of observables} of the Gaudin model. 

We have the formal state-field map $Y[\cdot, u]:  \VV_0^{\bm k} \to \tilde U_{\bm k}(\slMhat^{\oplus N})[[u, u^{-1}]]$ and translation operator $T\in \End(\VV_0^{\bm k})$ defined as above. $\VV_{0}^{\bm k}$ is a module over $\tilde U_{\bm k}(\slMhat^{\oplus N})$, and we get the state-field map $Y(\cdot,u) : \VV_0^{\bm k} \to \Hom(\VV_0^{\bm k}, \VV_0^{\bm k}((u)))$.

Let $\Delta : \slMhat \into \slMhat^{\oplus N}$ denote the diagonal embedding of $\slMhat$ into $\slMhat^{\oplus N}$,
\be \Delta x = \sum_{i=1}^N x^{(i)} \nn\ee 
It extends uniquely to an embedding $\Delta: U(\slMhat) \into U(\slMhat^{\oplus N})= U(\slMhat)^{\ox N}$ (the usual $N$-fold coproduct). For all $a, b \in \slM$ and $m,n\in \ZZ$, 
\be [\Delta a_m, \Delta b_n] = \Delta[a,b]_{n+m} - n \delta_{n+m,0} (a | b) \sum_{i=1}^n \cent^{(i)}\nn\ee
in $U(\slMhat^{\oplus N})$. 
Therefore $\Delta$ descends to an embedding of the quotients, $\Delta : U_{|\bm k|}(\slMhat) \into U_{\bm k}(\slMhat^{\oplus N})$, where $|\bm k| = \sum_{i=1}^N k_i$, and thence of their completions,
\be \Delta : \tilde U_{|\bm k|}(\slMhat) \longinto \tilde U_{\bm k}(\slMhat^{\oplus N}). \nn\ee

\subsection{Invariant tensors on $\slM$}
Let $I^a$, $a=1,\dots,\dim(\slM)$, be a basis of $\slM$ and $I_a$, $a=1,\dots,\dim(\slM)$ the dual basis with respect to the non-degenerate bilinear form $(\cdot|\cdot)$ from \eqref{def: bf}. 
Let $f^{ab}{}_c$ denote the structure constants, 
\be [I^a,I^b] = f^{ab}{}_c I^c \coloneqq \sum_{c} f^{ab}{}_c I^c,\ee 
where, from now on, we employ summation convention over repeated pairs of Lie algebra indices.
Since the symmetric bilinear form $(\cdot|\cdot)$ is non-degenerate, we may and shall 
%(e.g. by Takagi factorisation) 
suppose that the basis is chosen such that
\be (I_a|I_b) = \delta_{ab}. \nn\ee
%Let $\kappa_{ab} = (I_a,I_b)$ and $\kappa^{ab} = (I^a,I^b)$, so that $\kappa_{ab} \kappa^{bc} = \delta_a^c$, $I_a = \kappa_{ab}I^b$ and $I^a = \kappa^{ab} I_b$. We use these $\kappa_{ab}$ and $\kappa^{ab}$ to raise and lower indices of the components of any tensor over $\slM$. For example $f_{abc} \coloneqq \kappa_{ad}\kappa_{be} f^{de}{}_c$.
Then $I_a=I^a$ for each $a$ and we no longer need to distinguish between upper and lower indices, and we shall write for example $f_{abc} = f^{ab}{}_c$. 
We have  
\be \delta_{ab} = \tr I_a I_b, \qquad f_{abc} = \tr [I_a,I_b] I_c = \tr(I_a I_b I_c- I_bI_aI_c). \nn\ee

Recall that in general a tensor $t: \slM \times \dots \times \slM \to \CC$ is \emph{invariant} if 
\be t([a,x],y,\dots,z) + t(x,[a,y],\dots,z) + \dots + t(x,y,\dots,[a,z])=0 \nn\ee
for all $x,y,\dots,z$ and $a$ in $\slM$. In particular, $\delta_{ab}$ and $f_{abc}$ are the components of respectively a symmetric second-rank invariant tensor and a totally skew-symmetric third-rank invariant tensor.

The reason for specialising to $\sl_M$ in this paper is that in these types, and in no others, there exists a nonzero totally symmetric third-rank tensor. It is unique up to normalization, and is given by $t(x,y,z) \coloneqq \tr(xyz + yxz)$. Let $t_{abc}$ denote its components,
\be t_{abc} \coloneqq t(I_a, I_b, I_c) .\nn\ee
The Coxeter number and dual Coxeter number of $\slM$ (and of $\slMhat$) are equal to $\hc$.   
%For later use, we note the following tensor identities.

\begin{lemma}\label{lem: ti}
We have the following tensor identities
\begin{alignat}{2}
 f_{ade} f_{bdg} t_{ceg} & = - \hc  t_{abc} &  t_{abc} t_{dbc}     & = 2 \frac{\hc^2-4}{\hc} \delta_{ad},\nn\\   
 f_{abc} f_{dbc}         & = - 2 \hc  \delta_{ad},\qquad &  f_{ade} t_{bgd} t_{ceg} & = \phantom{1} \frac{\hc^2-4}{\hc} f_{abc}, \nn
\end{alignat}
and the tensors $f_{a d e} f_{dfg} f_{ehi} t_{bfh} t_{cgi}$, $f_{a cd} f_{def} f_{egh} f_{fij} t_{bgi} t_{chj}$ and $f_{a cd} f_{def} f_{egh} f_{fij} t_{bhj} t_{cgi}$ are identically zero. Moreover, we have the tensor identity
\begin{equation} \label{tt identity}
t_{ea(b} t_{cd)e} = t_{e (ab} t_{cd)e}.
\end{equation}
Here $X_{(a_1 \dots a_p)} \coloneqq \sum_{\sigma \in S_p} \frac{1}{p!} X_{\sigma(a_1) \dots \sigma(a_p)}$.
\begin{proof} By direct calculation in the defining matrix representation. \end{proof}
\end{lemma}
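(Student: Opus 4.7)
Since every tensor appearing in the statement is an invariant built from $f_{abc} = \tr(I_a I_b I_c - I_b I_a I_c)$ and $t_{abc} = \tr(I_a I_b I_c + I_b I_a I_c)$, each identity reduces to a trace calculation in the defining $M\times M$ representation. The sole computational input is the Fierz (completeness) relation for the orthonormal basis $\{I_a\}$ of traceless $M\times M$ matrices,
\[
\sum_a (I_a)_{ij}(I_a)_{kl} = \delta_{il}\delta_{jk} - \tfrac{1}{\hc}\delta_{ij}\delta_{kl},
\]
whose two most useful consequences are $\sum_a I_a X I_a = \tr(X)\,\mathrm{Id} - \tfrac{1}{\hc}X$ and $\sum_a \tr(XI_a)\tr(YI_a) = \tr(XY) - \tfrac{1}{\hc}\tr(X)\tr(Y)$, valid for all $X,Y\in\Mat_{\hc}(\CC)$. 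Each summation over a repeated Lie algebra index in the lemma is eliminated by one application of one of these relations.

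For the scalar identities I would begin with $f_{abc}f_{dbc} = -2\hc\,\delta_{ad}$: Fierz on the $c$-sum gives $\sum_b \tr([I_a,I_b][I_d,I_b])$ (the $\tfrac{1}{\hc}$ correction drops out since $\tr[\cdot,\cdot]=0$), and Fierz on the $b$-sum, together with $\sum_b I_b I_b = (\hc - \tfrac{1}{\hc})\mathrm{Id}$, collapses the four terms to $-2\hc\,\delta_{ad}$. The identity $t_{abc}t_{dbc} = 2\tfrac{\hc^2-4}{\hc}\delta_{ad}$ is handled identically after replacing commutators by anticommutators; the $-4/\hc$ shift is precisely the contribution of the trace-subtraction term in Fierz. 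The mixed identities $f_{ade}f_{bdg}t_{ceg} = -\hc\, t_{abc}$ and $f_{ade}t_{bgd}t_{ceg} = \tfrac{\hc^2-4}{\hc}f_{abc}$ follow from the same two-step Fierz procedure applied to the internal summed indices, with the surviving trace put into the form $\tr(I_a I_b I_c \pm I_b I_a I_c)$ by cyclicity. The symmetrization identity \eqref{tt identity} is immediate once both sides are expanded as sums of traces: after $(b,c,d)$-symmetrization, the full permutation symmetry of $t_{\bullet\bullet\bullet}$ identifies the two sides via relabelling of the internal index $e$.

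The main obstacle is the three vanishing identities, typified by $T_{abc} \coloneqq f_{ade}f_{dfg}f_{ehi}t_{bfh}t_{cgi}$. A brute-force Fierz reduction produces a large sum of degree-six matrix traces whose cancellation is in principle straightforward but lengthy; the slicker route is a uniqueness/symmetry argument. Namely, $T_{abc}$ is an $\slM$-invariant tensor of rank three (all contractions being with invariant tensors and summations being over a basis adapted to the invariant form), hence of the form $\alpha f_{abc} + \beta t_{abc}$. The dummy-index relabelling $(f,g,h,i)\mapsto(g,f,i,h)$ combined with the antisymmetry of the two contracted $f$'s shows $T_{abc} = T_{acb}$, forcing $\alpha = 0$; a single further contraction $T_{abc}t_{abc}$, evaluated using the scalar identities already established, determines $\beta$ and yields $\beta = 0$. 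The other two degree-five tensors are disposed of by the same method, the key input in each case being the interplay between the antisymmetry of the $f$'s and the symmetry of the $t$'s under a specifically chosen permutation of dummy indices. Keeping track of these signs under relabellings is where the bulk of the work lies.
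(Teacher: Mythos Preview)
Your plan is correct and is precisely a fleshed-out version of the paper's one-line proof (``by direct calculation in the defining matrix representation''); the Fierz relation you quote is the standard tool for such trace reductions, and your treatment of the four scalar identities and of \eqref{tt identity} is fine.

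For the three vanishing tensors your uniqueness-plus-contraction scheme would work, but it is more than is needed: each of them vanishes by a pure relabelling argument, with no appeal to the classification of rank-two or rank-three invariants. For $T_{abc}=f_{ade}f_{dfg}f_{ehi}t_{bfh}t_{cgi}$, swap $d\leftrightarrow e$ and then $(f,g)\leftrightarrow(h,i)$: the first swap contributes one sign from $f_{ade}$, the second swap exchanges $f_{dfg}\leftrightarrow f_{ehi}$ and leaves $t_{bfh},t_{cgi}$ invariant (by symmetry of $t$), so $T_{abc}=-T_{abc}$. For the two rank-two tensors $f_{acd}f_{def}f_{egh}f_{fij}t_{bgi}t_{chj}$ and $f_{acd}f_{def}f_{egh}f_{fij}t_{bhj}t_{cgi}$, the analogous swap $e\leftrightarrow f$ followed by $(g,h)\leftrightarrow(i,j)$ does the same job. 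This spares you the (potentially delicate) evaluation of $T_{abc}t_{abc}$ from the scalar identities.
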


\subsection{Quadratic and cubic states}\label{sec: S1S2def}
With $N\in \ZZ_{\geq 1}$ as above, let $z_1,\dots,z_N\in \CC$ be a collection of distinct points in the complex plane. 
For any $A\in\slMhat$ let us define the $\slMhat^{\oplus N}$-valued rational function 
\be A(z) \coloneqq \sum_{i=1}^N \frac{A^{(i)}}{z-z_i} .
%\quad\text{and more generally}\quad A(z;p) \coloneqq \sum_{i=1}^N \frac{A^{(i)}}{(z-z_i)^p} 
\label{def: Az}\ee
%so that $A(z;1) = A(z)$. 

\begin{lemma}\label{lem: Az relns} For any $A,B \in \slMhat$,
\begin{align} 
[A(z),B(w)] &= -\frac{ [A,B](z) - [A,B](w)}{z-w}, \qquad z\neq w, \nn\\
[A(z),B(z)] &= -[A,B]'(z) \nn \end{align}
where in the second line the ${}'$ denotes derivative with respect to the argument, $z$.\qed
\end{lemma}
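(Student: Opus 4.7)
The plan is to expand the commutator using the definition in \eqref{def: Az} and then apply a partial fractions identity.

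First, from $A(z) = \sum_i A^{(i)}/(z-z_i)$ and $B(w) = \sum_j B^{(j)}/(w-z_j)$ we have
\begin{equation*}
[A(z),B(w)] = \sum_{i,j=1}^N \frac{[A^{(i)},B^{(j)}]}{(z-z_i)(w-z_j)}.
\end{equation*}
Because $A^{(i)}$ and $B^{(j)}$ lie in different summands of $\slMhat^{\oplus N}$ whenever $i\neq j$, their commutator vanishes. Also $[A^{(i)},B^{(i)}] = [A,B]^{(i)}$, where $[A,B]$ is computed in $\slMhat$ (the central contribution, being a scalar multiple of $\cent^{(i)}$, sits in the $i^{\rm th}$ summand in the obvious way). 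Therefore the double sum collapses to a single sum
\begin{equation*}
[A(z),B(w)] = \sum_{i=1}^N \frac{[A,B]^{(i)}}{(z-z_i)(w-z_i)}.
\end{equation*}

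Next I would apply the elementary partial-fractions identity
\begin{equation*}
\frac{1}{(z-z_i)(w-z_i)} = -\frac{1}{z-w}\left(\frac{1}{z-z_i} - \frac{1}{w-z_i}\right),
\end{equation*}
which is valid for $z\neq w$. Substituting and recognising the two resulting sums as $[A,B](z)$ and $[A,B](w)$ in the sense of \eqref{def: Az} yields the first identity.

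For the second identity, at coincident points I would simply compute $[A(z),B(z)]$ directly as $\sum_i [A,B]^{(i)}/(z-z_i)^2$ and observe that this equals $-\frac{d}{dz}\sum_i [A,B]^{(i)}/(z-z_i) = -[A,B]'(z)$; equivalently, one can pass to the limit $w\to z$ in the first identity, using the definition of the derivative. There is no real obstacle here: the content is entirely the partial fractions manipulation together with the observation that different tensor factors commute. The only point worth a little care is bookkeeping of the central term in $[A,B]$, but since it commutes with everything the same argument goes through unchanged.
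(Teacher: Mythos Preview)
Your argument is correct and is exactly the routine verification the paper has in mind; indeed the paper omits the proof entirely (the lemma is stated with a \qed). The partial-fractions identity and the observation that different tensor factors commute are the whole content, and your bookkeeping of the central term is fine since $[A,B]^{(i)}$ includes the $\cent^{(i)}$ contribution automatically.
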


Now we define states $\s_1(z)$ and $\s_2(z)$ in $\VV_0^{\bm k}$, depending rationally on $z$, as follows
\begin{align} \s_1(z) &\coloneqq \ha I^a_{-1}(z) I^a_{-1}(z) \vackk ,\nn\\
\s_2(z) &\coloneqq \ta t_{abc} I^a_{-1}(z) I^b_{-1}(z) I^c_{-1}(z) \vackk.\nn\end{align}
Define also the \emph{twist function} 
\be \varphi(z) \coloneqq \sum_{i=1}^N \frac{k_i}{z-z_i}. \label{tdef}\ee
and, for any integer $j$, the \emph{twisted derivative operator of degree $j$ with respect to $z$},
\be \DD z j \coloneqq \del_z - \frac{j}{\hc} \varphi(z).\label{def: td}\ee
Recall the translation operator $T \in \End(\VV_0^{\bm k})$.  

\subsection{Main result} We can now state the main results of the paper.
\begin{theorem}\label{thm: S1S2 zeroth prod}
For all $i,j\in\{1,2\}$, there exist $\VV_0^{\bm k}$-valued rational functions $A_{ij}(z,w)$ and $B_{ij}(z,w)$ such that 
\begin{equation} \label{SiSj 0prod}
\s_i(z)_{(0)} \s_j(w) = \big( j \DD z i - i \DD w j \big) A_{ij}(z,w) + T B_{ij}(z,w).
\end{equation}
Moreover, $A_{ij}(z,w)$ are regular at $z = w$ up to terms of the form $TZ$ with $Z \in \VV_0^{\bm k}$.
\begin{proof}
The first statement follows from a (lengthy) direct computation using the various identities from Lemma \ref{lem: ti}. Explicitly, we find that one choice of functions $A_{ij}(z,w)$ and $B_{ij}(z,w)$ is given by:
\begin{equation*}
A_{11}(z,w) = \frac{\hc}{z-w} I^a_{-2}(z) I^a_{-1}(w)\vackk, \qquad
%B_{11}(z,w) &= -  \frac{\hc}{z-w} I^a_{-2}(z) I^a_{-1}(w)\vackk,\\
B_{11}(z,w) = \frac{\hc}{(z-w)^2} I^a_{-1}(z) I^a_{-1}(w) \vackk
\end{equation*}
for $i=j=1$,
\begin{align*}
A_{12}(z,w) &= \frac{\hc}{2(z-w)} t_{abc} I^a_{-2}(z) I^b_{-1}(w) I^c_{-1}(w) \vackk,\\
B_{12}(z,w) &= \frac{\hc}{2 (z-w)^2} t_{abc} I^a_{-1}(z) I^b_{-1}(w) I^c_{-1}(w) \vackk,
\end{align*}
for $i=1$ and $j=2$,
\begin{align*}
A_{21}(z,w) &= \frac{\hc}{z-w} t_{abc} I^a_{-2}(z) I^b_{-1}(z) I^c_{-1}(w) \vackk,\\
B_{21}(z,w) &= \frac{\hc}{(z-w)^2} t_{abc} I^a_{-1}(z) I^b_{-1}(z) I^c_{-1}(w) \vackk,
\end{align*}
for $i=2$ and $j=1$, and finally
\begin{align} \label{A22 expression}
A_{22}(z,w) &= \frac{\hc}{2(z-w)} t_{abe}t_{cde} I^a_{-2}(z) I^b_{-1}(z) I^c_{-1}(w)I^d_{-1}(w) \vackk \nn\\
&- \frac{\hc(\hc^2 - 4)}{(z-w)^2} \bigg( \frac{\varphi(z) - \varphi(w)}{\hc} I^a_{-4}(z) I^a_{-1}(w) \nn\\
&+ \ha \big( I^a_{-4}{}'(z) I^a_{-1}(w) + I^a_{-4}(z) I^a_{-1}{}'(w) \big) - \frac{1}{\hc} f_{abc} I^a_{-3}(z) I^b_{-1}(z) I^c_{-1}(w) \bigg) \vackk \nn\\
&\quad + \frac{\hc (\hc^2 - 4)}{(z-w)^3} \big( I^a_{-4}(z) I^a_{-1}(z) - 3 I^a_{-4}(z) I^a_{-1}(w) - I^a_{-3}(z) I^a_{-2}(z) \big) \vackk
\end{align}
and
\begin{align*}
B_{22}(z,w) &= \frac{\hc}{2(z-w)^2} t_{abe}t_{cde}  I^a_{-1}(z) I^b_{-1}(z) I^c_{-1}(w)I^d_{-1}(w)\vackk \\ 
&\qquad -\frac{2\hc(\hc^2 - 4)}{(z-w)^3} \bigg( 2 \frac{\varphi(z) - \varphi(w)}{\hc} I^a_{-3}(z) I^a_{-1}(w) + I^a_{-3}(z) I^a_{-1}{}'(w)\\
&\qquad\qquad\qquad\qquad\qquad - \frac{1}{\hc} f_{abc} I^a_{-2}(z) I^b_{-1}(z) I^c_{-1}(w) \bigg) \vackk\\
&\qquad+ \frac{2 \hc (\hc^2 - 4)}{(z-w)^4} \big(I^a_{-3}(z) I^a_{-1}(z) - 5 I^a_{-3}(z) I^a_{-1}(w)\\
&\qquad\qquad\qquad\qquad\qquad - I^a_{-2}(z) I^a_{-2}(z) + \ha I^a_{-2}(z) I^a_{-2}(w)\big) \vackk.
\end{align*}
in the case $i=j=2$.

To show the `moreover' part, it suffices to expand the expressions for $A_{ij}(z,w)$ given above in $w$ near $z$ and express all singular terms in the desired form. When $i = j = 1$ we have
\begin{equation*}
A_{11}(z,w) = \frac{\hc}{2(z-w)} T I^a_{-1}(z) I^a_{-1}(z)\vackk + \ldots 
\end{equation*}
where the dots represent terms regular at $z = w$. Likewise, for $i = 1$, $j=2$ and $i=2$, $j=1$ we find, respectively,
\begin{align*}
A_{12}(z,w) &= \frac{\hc}{3(z-w)} t_{abc} T I^a_{-1}(z) I^b_{-1}(z) I^c_{-1}(z) \vackk + \ldots\\
A_{21}(z,w) &= \frac{\hc}{3(z-w)} t_{abc} T I^a_{-1}(z) I^b_{-1}(z) I^c_{-1}(z) \vackk + \ldots
\end{align*}
as required.

Now consider the case $i=j=2$. The terms of order $(z-w)^{-3}$ and $(z-w)^{-2}$ in $A_{22}(z,w)$ can be written as
\begin{align*}
&- \frac{\hc (\hc^2 - 4)}{3(z-w)^3} T \big( 2 I^a_{-3}(z) I^a_{-1}(z) + \qa I^a_{-2}(z) I^a_{-2}(z) \big) \vackk\\
%\end{equation*}
%while the term of order $(z-w)^{-2}$ can be written as
%\begin{equation*}
%\frac{\hc (\hc^2 - 4)}{(z-w)^2} T \bigg( \!\!- \partial_z \Big( \frac{1}{3} I^a_{-3}(z) I^a_{-1}(z) - \frac{1}{12} I^a_{-2}(z) I^a_{-2}(z) \Big) + 2 I^a_{-3}(z) I^a_{-1}{}'(z) \bigg) \vackk.
%\end{equation*}
%\begin{equation*}
&\qquad +\frac{\hc (\hc^2 - 4)}{6(z-w)^2} T \big( 5 I^a_{-3}(z) I^a_{-1}{}'(z) - I^a_{-3}{}'(z) I^a_{-1}(z) + \ha I^a_{-2}{}'(z) I^a_{-2}(z) \big) \vackk.
\end{align*}
The terms of order $(z-w)^{-1}$ in $A_{22}(z,w)$ can also be written as $T Z$ for some state $Z \in \VV_0^{\bm k}$. In order to see this, one needs to note that the singular part of the first term on the right hand side of \eqref{A22 expression} can be written as
\begin{align*}
&\frac{\hc}{z - w} t_{abe} t_{cde} I^a_{-2}(z) I^b_{-1}(z) I^c_{-1}(z) I^d_{-1}(z) \vackk\\
&\quad = \frac{\hc}{z - w} t_{abe} t_{cde} I^a_{-2}(z) I^{(b}_{-1}(z) I^c_{-1}(z) I^{d)}_{-1}(z) \vackk\\
&\quad\qquad - \frac{\hc(\hc^2 - 4)}{z - w} f_{abc} \bigg( \frac{2}{3} I^a_{-2}(z) I^c_{-2}{}'(z) I^b_{-1}(z) + \frac{1}{3} I^a_{-2}(z) I^b_{-1}(z) I^c_{-2}{}'(z) \bigg) \vackk.
\end{align*}
The first term on the right hand side is proportional to
\begin{align*}
t_{abe} t_{cde} I^a_{-2}(z) I^{(b}_{-1}(z) I^c_{-1}(z) I^{d)}_{-1}(z) \vackk %= t_{a(b}^{\quad\; e}\; t_{cd)e} I^a_{-2}(z) I^b_{-1}(z) I^c_{-1}(z) I^d_{-1}(z)\\
&= t_{e(ab} t_{cd)e} I^a_{-2}(z) I^b_{-1}(z) I^c_{-1}(z) I^d_{-1}(z) \vackk\\
&= \frac{1}{4} t_{e(ab} t_{cd)e} T I^a_{-1}(z) I^b_{-1}(z) I^c_{-1}(z) I^d_{-1}(z) \vackk
\end{align*}
where in the first equality we used the identity \eqref{tt identity}.
%\begin{align} A_{22}(z,w) &= \frac{ {\hc}}{z-w} t_{abe}t_{cde} T I^a_{-1}(z) I^b_{-1}(z) I^c_{-1}(z)I^d_{-1}(z) \vackk \nn\\
%&- \frac{2 (\hc^2 - 4)}{3(z-w)} \bigg( \frac{\varphi'(z)}{3} + \frac{M}{(z-w)^2} \bigg) T \big( I^a_{-3}(z) I^a_{-1}(z) - \mbox{\small $\frac{1}{4}$} I^a_{-2}(z) I^a_{-2}(z) \big) \vackk \nn\\
%&- \frac{\hc(\hc^2 - 4)}{(z-w)^2} \bigg( I^a_{-4}{}'(z) I^a_{-1}(z) - 3 I^a_{-4}{}'(z) I^a_{-1}{}'(z) (z-w) \bigg) \vackk \nn\\
%&- \frac{\hc(\hc^2 - 4)}{(z-w)^2} \bigg( 7 I^a_{-4}(z) I^a_{-1}{}'(z) + 4 I^a_{-4}(z) I^a_{-1}{}''(z) (z-w) \bigg) \vackk \nn\\
%&+ \frac{\hc (\hc^2 - 4)}{(z-w)^2} \big( \frac{1}{\hc} f_{abc} T I^a_{-2}(z) I^b_{-1}(z) I^c_{-1}(z) - 2 T I^a_{-1}(z) I^a_{-3}{}'(z) + 6 I^a_{-1}(z) I^a_{-4}{}'(z) \big) \vackk \nn\\
%&+ \frac{2 \hc (\hc^2 - 4)}{z-w} \frac{1}{\hc} f_{abc} T I^b_{-1}(z) I^a_{-2}(z) I^c_{-1}{}'(z) \vackk \nn\\
%&- \frac{2 \hc (\hc^2 - 4)}{z-w} \frac{1}{\hc} f_{abc} I^b_{-1}(z) I^a_{-2}(z) I^c_{-2}{}'(z) \vackk \nn\\
%%&- \frac{3 \hc (\hc^2 - 4)}{(z-w)^2} \big( 2 I^a_{-4}(z) I^a_{-1}{}'(z) + I^a_{-4}(z) I^a_{-1}{}''(z) (z-w) \big) \vackk\nn\\
%&+ \frac{2 \hc (\hc^2 - 4)}{3(z-w)^3} T \big( I^a_{-3}(z) I^a_{-1}(z) - I^a_{-2}(z) I^a_{-2}(z) \big) \vackk + \ldots
%\end{align}
\end{proof}
\end{theorem}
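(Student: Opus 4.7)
The plan is to carry out the zeroth-product computation directly, using the Borcherds/Wick-type formula for modes of normally ordered products in $\VV_0^{\bm k}$ together with the mode commutation relations of Section 2.1. Unwinding $\s_i(z)_{(0)}\s_j(w)$ in this way produces a sum of ordered products of currents $I^a_n(z)$ and $I^b_m(w)$ acting on $\vackk$. Each commutator of currents contributes two terms: a bracketed-current term $f_{abc} I^c_{n+m}(\cdot)$, and, when $n+m=0$, a central contribution $-n\delta_{a,b}\,\cent^{(i)}$ which evaluates on $\vackk$ as $-n\delta_{a,b}k_i$. Summing over the marked points, these level contributions assemble into the twist function $\varphi(z)$ of \eqref{tdef} (or $\varphi(w)$), producing the anticipated twisted-derivative pieces $\DD z i$ and $\DD w j$. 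Pure-derivative contributions $\del_z$ or $\del_w$ arise from commutator mode-numbers; these combine with the level terms into full $\DD z i$ and $\DD w j$.

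The cases $(i,j)=(1,1),(1,2),(2,1)$ are close in spirit to the Sugawara construction; the simplifications are controlled by the identities $f_{abc}f_{dbc} = -2\hc \delta_{ad}$ and $f_{ade}f_{bdg}t_{ceg} = -\hc t_{abc}$ of Lemma \ref{lem: ti}. The serious technical obstacle is $(i,j)=(2,2)$: the raw computation generates terms containing as many as three $f$'s and two $t$'s contracted together. Here the \emph{vanishing} identities of Lemma \ref{lem: ti}---namely that $f_{ade}f_{dfg}f_{ehi}t_{bfh}t_{cgi}$ and its two symmetric variants are identically zero---are precisely what force these potentially obstructing terms to drop out, leaving an expression which can be closed onto the form $(j\DD z i - i\DD w j)A_{ij}(z,w)+TB_{ij}(z,w)$. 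Identifying the correct $A_{ij}, B_{ij}$ is then a matter of bookkeeping: one reads off explicit formulas by collecting coefficients of each power of $(z-w)^{-1}$ and each twist-function factor.

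For the regularity statement, I would Taylor expand each $A_{ij}(z,w)$ about $w=z$ and exhibit the singular part as $TZ$ for some $Z\in\VV_0^{\bm k}$. For $A_{11},A_{12},A_{21}$ this is immediate from the observation that $(\del_z+\del_w)X(z,w)\vackk = TX(z,w)\vackk$ whenever $X(z,w)$ is a polynomial in the negative-mode currents at $z$ and $w$ acting on the vacuum, once one rearranges into symmetric combinations. The difficult case is $A_{22}$. The triple and double poles can be handled similarly after integration-by-parts of the $\del_z$'s hidden in the $I^a_{-n}{}'(z)$ terms. The principal obstacle is the simple pole of $A_{22}$, which contains the quartic $t_{abe}t_{cde}I^a_{-2}(z)I^b_{-1}(z)I^c_{-1}(z)I^d_{-1}(z)\vackk$. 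Only the part symmetric in the three lower indices $(b,c,d)$ contributes, but full symmetry in all four of $(a,b,c,d)$ is required to recognise the term as a $T$-derivative. This is precisely where the tensor identity \eqref{tt identity}, $t_{ea(b}t_{cd)e}=t_{e(ab}t_{cd)e}$, enters: it promotes the $(b,c,d)$-symmetric quartic to a fully $(a,b,c,d)$-symmetric one, at which point the state is manifestly $\tfrac14 T$ applied to a symmetric quartic. I expect this single identity to be the linchpin of the whole cubic computation.
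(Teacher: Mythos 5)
Your plan follows the same route as the paper: part one is the same lengthy direct mode computation, with the level terms from the central extension assembling into $\varphi$ and hence into the twisted derivatives $\DD zi$, $\DD wj$, and with the contraction identities of Lemma \ref{lem: ti} (including the vanishing of the triple-$f$, double-$t$ contractions in the $(2,2)$ case) doing the simplification; for the ``moreover'' part you correctly isolate the simple pole of $A_{22}$ carrying the quartic $t_{abe}t_{cde}I^a_{-2}(z)I^b_{-1}(z)I^c_{-1}(z)I^d_{-1}(z)\vackk$ as the genuine difficulty, and you correctly identify the identity \eqref{tt identity} as the linchpin that upgrades $(b,c,d)$-symmetry to full $(a,b,c,d)$-symmetry so that the state becomes $\tfrac14 T$ of a symmetric quartic. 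This is exactly how the paper's proof closes that case.

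There is, however, one incorrect auxiliary claim: the identity $(\del_z+\del_w)X(z,w)\vackk = TX(z,w)\vackk$ is false. The translation operator $T$ is the derivative in the \emph{loop} variable, acting by the mode shift $[T,a_{-n}]=n\,a_{-n-1}$, whereas $\del_z$ differentiates in the \emph{spectral} parameter; e.g.\ $\del_z I^a_{-1}(z)=-\sum_i I^{a(i)}_{-1}/(z-z_i)^2$ has nothing to do with $I^a_{-2}(z)=\sum_i I^{a(i)}_{-2}/(z-z_i)$. The correct mechanism for the easy cases is the same one you invoke for the hard case: if the coefficient tensor is totally symmetric in the $k$ current indices, then applying $T$ to a product of $k$ currents $I_{-1}(z)$ on the vacuum yields $k$ times the state with one mode promoted from $-1$ to $-2$, whence $I^a_{-2}(z)I^a_{-1}(z)\vackk=\tfrac12 T I^a_{-1}(z)I^a_{-1}(z)\vackk$ and $t_{abc}I^a_{-2}(z)I^b_{-1}(z)I^c_{-1}(z)\vackk=\tfrac13 t_{abc}T I^a_{-1}(z)I^b_{-1}(z)I^c_{-1}(z)\vackk$ (using that the modes involved commute on the vacuum). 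Once you replace the spurious $\del_z+\del_w$ identity with this, your argument for $A_{11}$, $A_{12}$, $A_{21}$, and for the higher-order poles of $A_{22}$, goes through as in the paper.
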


\begin{theorem} \label{thm: gsym}
For any $x \in \slM$ and $n \in \ZZ_{\geq 0}$ we have
\begin{align}
\Delta x_n \s_1(z) &= D^{(1)}_z \big( \!- \hc x_{-1}(z) \vac \delta_{n, 1}\big)\nn\\
\Delta x_n \s_2(z) &= D^{(2)}_z \big( \!- \ha\hc t_{abc} (x | I^a) I^b_{-1}(z) I^c_{-1}(z) \vac \delta_{n,1}\big).\nn
\end{align}
\begin{proof}
As for Theorem \ref{thm: S1S2 zeroth prod}, the proof is by direct calculation using the identities in Lemma \ref{lem: ti}. 
\end{proof}
\end{theorem}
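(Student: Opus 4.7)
My approach is a direct calculation. The starting observation is that $x_n^{(i)}\vackk = 0$ for all $n \geq 0$, so it suffices to commute $\Delta x_n = \sum_{i=1}^N x_n^{(i)}$ through the mode operators defining $\s_j(z)$ using the basic commutator
\[
 [x_n^{(i)}, I^a_{-1}(z)] \;=\; \frac{[x,I^a]_{n-1}^{(i)}}{z-z_i} \;+\; \delta_{n,1}\,(x|I^a)\frac{k_i}{z-z_i},
\]
and its iterates. I would split the argument into three cases according to $n$.

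For $n=0$ there is no central contribution at any step, and after commuting fully, the result is a sum of terms in which a single $I^a_{-1}(z)$ has been replaced by $[x,I^a]_{-1}(z)$, contracted against the $\slM$-invariant tensor $\delta_{ab}$ (for $\s_1$) or $t_{abc}$ (for $\s_2$). The sum vanishes by invariance of the tensor under $\ad x$. For $n \geq 2$ no central term appears at the first commutation, and once $x_n^{(i)}$ has been pushed past every factor, each resulting monomial ends in $[x,I^a]_{n-1}^{(i)}$ (or a further iterated bracket) of nonnegative mode acting on $\vackk$, which vanishes. Secondary central contributions arising from intermediate commutators -- for instance at $n=2$ the commutator $[[x,I^a]_{1}, I^b_{-1}]$ generates a piece proportional to $([x,I^a]|I^b) = f_{xab}$ -- are killed after contraction with $\delta_{ab}$ or $t_{ab\cdot}$, both symmetric in $(a,b)$, whereas $f_{xab}$ is antisymmetric in $(a,b)$.

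The nontrivial case is $n=1$. The central piece $(x|I^a)k_i/(z-z_i)$ contributes once per factor of $I^a_{-1}(z)$ in $\s_j(z)$; after using the total symmetry of $\delta_{ab}$ or $t_{abc}$ and summing over $i$, it supplies the $\frac{j}{\hc}\varphi(z)$ part of $D^{(j)}_z$. Simultaneously the non-central piece $[x,I^a]_0^{(i)}/(z-z_i)$ must be commuted once further against the remaining modes. For $\s_1$ this telescopes through the adjoint Casimir identity $\sum_a [[x,I^a],I^a] = 2\hc x$, giving $\sum_i 2\hc x_{-1}^{(i)}/(z-z_i)^2 \vackk = -2\hc\,\partial_z x_{-1}(z)\vackk$, which reassembles with the central term into $-\hc D^{(1)}_z x_{-1}(z)\vac$. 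For $\s_2$ the analogous reorganisation relies on the identities of Lemma~\ref{lem: ti} and yields $-\ha\hc\, t_{abc}(x|I^a) D^{(2)}_z I^b_{-1}(z) I^c_{-1}(z)\vac$.

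The main combinatorial obstacle is the cubic case $\s_2$ at $n=1$: three successive commutations produce many terms quadratic and cubic in the $I^{\bullet}_{-1}(z)$, whose coefficients are double contractions of $t_{abc}$ with structure constants $f_{x\cdot\cdot}$ and with itself. Rearranging these using $f_{ade}f_{bdg}t_{ceg} = -\hc\, t_{abc}$, $t_{abc}t_{dbc} = 2\frac{\hc^2-4}{\hc}\delta_{ad}$ and $f_{ade}t_{bgd}t_{ceg} = \frac{\hc^2-4}{\hc}f_{abc}$ from Lemma~\ref{lem: ti}, so that the numerous spurious contributions cancel and only the claimed $D^{(2)}_z$-image survives, is the real calculation to verify.
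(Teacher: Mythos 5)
Your proposal is correct and follows essentially the same route as the paper, whose proof of Theorem~\ref{thm: gsym} is likewise a direct commutator computation resting on the tensor identities of Lemma~\ref{lem: ti}: the case analysis in $n$, the central term supplying the $\varphi(z)$ part of the twisted derivative, the adjoint Casimir for $\s_1$, and the $f$--$t$ contraction identities for $\s_2$ are exactly the required ingredients. The only blemish is a dropped overall prefactor of $\ha$ (from the definition of $\s_1(z)$) in your intermediate display for the non-central piece at $n=1$, which should read $-\hc\,\del_z x_{-1}(z)\vackk$ rather than $-2\hc\,\del_z x_{-1}(z)\vackk$; your final reassembly into $-\hc D^{(1)}_z x_{-1}(z)\vac$ is nonetheless the correct one.
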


%\begin{theorem}
%For any $\ms x \in \slM=\sl_M$ and $n \in \ZZ_{\geq 0}$ we have
%\begin{align}
%\Delta \ms x_n S_1(z) &= D^1_z \left(- \hc \ms x_{-1}(z) \vac \delta_{n, 1}\right)\nn\\
%\Delta \ms x_{n} S_2(z) &= D^2_z \left( - \ha\hc t_{abc} (\ms x | I^a) I^b_{-1}(z) I^c_{-1}(z) %\vac \delta_{n,1}\right).\nn
%\end{align}
%\end{theorem}

\subsection{Quadratic Gaudin Hamiltonians}

Recall from \S\ref{sec: alg obs} that we are assuming the levels to be non-critical, \emph{i.e.} $k_i \neq -\hc$ for each $i \in \{ 1, \ldots, N\}$. Let
\begin{equation} \label{Seg Sug i}
\omega^{(i)} \coloneqq \frac{1}{2(k_i + \hc)} I^{a(i)}_{-1} I^{a(i)}_{-1} \vackk \in \VV_0^{\bm k}
\end{equation}
be the corresponding Segal-Sugawara state at site $i$. It has the property that for any $A = \bigotimes_{i=1}^N A_i \in \VV_0^{\bm k}$ we have $\big(\omega^{(i)}\big)_{(0)} A = A_1 \otimes \ldots \otimes A_{i-1} \otimes TA_i \otimes A_{i+1} \otimes \ldots \otimes A_N$. Let us also introduce the $\VV^{\bm k}_0$-valued rational function
\begin{equation*}
\omega(z) \coloneqq \sum_{i=1}^N \frac{\omega^{(i)}}{z - z_i}.
\end{equation*}
Consider the state
\begin{equation} \label{straight S1 def}
s_1(z) \coloneqq \s_1(z) + M \DD z1 \omega(z) \in \VV^{\bm k}_0
\end{equation}
depending rationally on $z$.

\begin{theorem} \label{thm: S1 0 cal Si}
For $i \in \{ 1, 2\}$, we have
\begin{equation*}
s_1(z)_{(0)} \s_i(w) = - \DD wi A_{1i}(z,w) + T \bigg( B_{1i}(z,w) + \DD z1 \frac{M \s_i(w)}{z-w} \bigg),
\end{equation*}
with the $\VV^{\bm k}_0$-valued rational functions $A_{1i}(z,w)$ and $B_{1i}(z,w)$ as in Theorem \eqref{thm: S1S2 zeroth prod}.
\begin{proof}
We have
\begin{align*}
\omega(z)_{(0)} \s_2(w) %&= \omega_{(0)}(z) \ta t_{abc} I^a_{-1}(w) I^b_{-1}(w) I^c_{-1}(w) \vackk\\
%&= t_{abc} [\omega(z)_{(0)}, I^a_{-1}(w)] I^b_{-1}(w) I^c_{-1}(w) \vackk\\
&= - \frac{1}{z-w} t_{abc} \big( I^a_{-2}(z) - I^a_{-2}(w) \big) I^b_{-1}(w) I^c_{-1}(w) \vackk\\
&= - \frac{2}{M} A_{12}(z,w) + T \bigg( \frac{\s_2(w)}{z-w} \bigg).
\end{align*}
Similarly, one finds that
\begin{equation*}
\omega(z)_{(0)} \s_1(w) %&= \omega_{(0)}(z) \ha I^a_{-1}(w) I^a_{-1}(w) \vackk\\
%&= [\omega_{(0)}(z), I^a_{-1}(w)] I^a_{-1}(w) \vackk\\
%&= - \frac{1}{z-w} \big( [\omega_{(0)}, I^a_{-1}](z) - [\omega_{(0)}, I^a_{-1}](w) \big) I^a_{-1}(w) \vackk\\
%&
= - \frac{1}{M} A_{11}(z,w) + T \bigg( \frac{\s_1(w)}{z-w} \bigg).
\end{equation*}
Acting with the twisted derivative $\DD z1$ on the above equations we obtain
\begin{equation*}
\big( M \DD z1 \omega(z) \big)_{(0)} \s_j(w) = - j \DD z1 A_{1j}(z,w) + T \DD z1 \frac{M \s_j(w)}{z-w},
\end{equation*}
for $j \in \{ 1,2 \}$.
The result now follows from adding this to \eqref{SiSj 0prod} with $i = 1$ and using the definition \eqref{straight S1 def} of the state $s_1(z)$.
\end{proof}
\end{theorem}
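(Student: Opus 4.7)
The plan is to decompose $s_1(z) = \s_1(z) + M \DD z 1 \omega(z)$ and treat the two contributions to the zeroth product separately. By $\CC$-linearity of the $(0)$th product,
\begin{equation*}
s_1(z)_{(0)} \s_i(w) = \s_1(z)_{(0)} \s_i(w) + M \big(\DD z 1 \omega(z)\big)_{(0)} \s_i(w).
\end{equation*}
The first summand I will handle by invoking Theorem \ref{thm: S1S2 zeroth prod} directly, with the role of $(i,j)$ there played by $(1,i)$; it evaluates to $(i \DD z 1 - \DD w i) A_{1i}(z,w) + T B_{1i}(z,w)$.

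For the second summand I will first establish the identity
\begin{equation*}
\omega(z)_{(0)} \s_i(w) = -\frac{i}{M}\, A_{1i}(z,w) + T\!\left(\frac{\s_i(w)}{z-w}\right), \qquad i \in \{1, 2\},
\end{equation*}
which is the technical heart of the argument. Three ingredients are needed: (a) the Sugawara identity $(\omega^{(j)})_{(0)} = T^{(j)}$, valid at each site because $k_j \neq -M$; (b) the derivation property of $T^{(j)}$ on the tensor-product structure, so that it acts as $T$ on factors carrying the index $j$ and trivially elsewhere; and (c) the partial-fraction identity
\begin{equation*}
\frac{1}{(z-z_j)(w-z_j)} = -\frac{1}{z-w}\left(\frac{1}{z-z_j} - \frac{1}{w-z_j}\right),
\end{equation*}
which produces the overall factor $(z-w)^{-1}$. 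Expanding $\s_i(w)$ as a sum over sites, applying $T^{(j)}$ by Leibniz, summing over $j$, and invoking (c), one finds that the piece with $z$-dependence concentrated at $w - z_j$ reassembles into $T(\s_i(w)/(z-w))$, while the piece with $z$-dependence at $z - z_j$ is exactly $-(i/M) A_{1i}(z,w)$; the combinatorial coefficient $i$ arises from the symmetry of the tensor defining $\s_i$.

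Next I will apply $M \DD z 1$ to this identity. Since the formal state-field map $A \mapsto A_{(0)}$ is $\CC$-linear in $A$ and $\DD z 1 = \del_z - \varphi(z)/M$ acts only on scalar coefficients depending on $z$, we have $(\DD z 1 \omega(z))_{(0)} = \DD z 1 \circ \omega(z)_{(0)}$ as operators on $\VV_0^{\bm k}$. Moreover $\DD z 1$ commutes with $T$, since the former acts on $z$-scalars while the latter acts on states. Therefore
\begin{equation*}
M \big(\DD z 1 \omega(z)\big)_{(0)} \s_i(w) = -i\, \DD z 1 A_{1i}(z,w) + T \DD z 1 \frac{M\s_i(w)}{z-w}.
\end{equation*}
Adding the two summands, the $i\, \DD z 1 A_{1i}(z,w)$ contributions cancel and the claimed formula results.

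The main obstacle is the explicit computation of $\omega(z)_{(0)} \s_i(w)$ in the case $i = 2$, where $T^{(j)}$ must be distributed by Leibniz across the cubic expression $t_{abc} I^a_{-1}(w) I^b_{-1}(w) I^c_{-1}(w)$. The total symmetry of $t_{abc}$ is what collapses the three resulting Leibniz terms into a single expression proportional to $A_{12}(z,w)$; once that identity is in hand the remainder is routine bookkeeping.
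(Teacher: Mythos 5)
Your proposal is correct and follows essentially the same route as the paper: both decompose $s_1(z)$ via \eqref{straight S1 def}, establish the key identity $\omega(z)_{(0)} \s_i(w) = -\tfrac{i}{M} A_{1i}(z,w) + T\big(\s_i(w)/(z-w)\big)$ using the Sugawara property of $\omega^{(j)}$ together with partial fractions, then apply $M\DD z1$ and add the result to \eqref{SiSj 0prod} so that the $i\,\DD z1 A_{1i}$ terms cancel. The paper simply records the intermediate expression $-\tfrac{1}{z-w}\,t_{abc}\big(I^a_{-2}(z)-I^a_{-2}(w)\big)I^b_{-1}(w)I^c_{-1}(w)\vackk$ explicitly rather than describing the Leibniz bookkeeping in words.
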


\subsection{Twisted cycles}
Let us define 
\be \P(z) \coloneqq \prod_{j=1}^N (z-z_j)^{k_j}.\nn\ee
It is a multivalued function on $\CC\setminus\{z_1,\dots,z_N\}$. 

Pick $i\in \ZZ$. Let $\gamma$ be a closed contour in $\CC\setminus\{z_1,\dots,z_N\}$ along which there exists a univalued branch of the function $\P(z)^{i/\hc}$. 
For example one can let $\gamma$ be a Pochhammer contour about any pair of the marked points $z_1,\dots,z_N$.
The pair, consisting of such a contour $\gamma$ and univalued branch of $\P(z)^{i/\hc}$ along it, defines a cycle (possibly the zero cycle) in the twisted homology corresponding to $\DD zi$ -- see \cite{LVY1} for the precise definition.

\begin{lemma} \label{lem: stokes}
One has
\begin{equation*}
\int_{\gamma} \P(z)^{-i/\hc} \DD zi f(z) dz = 0
\end{equation*}
for any meromorphic $f$ which is nonsingular on $\gamma$. 
\begin{proof}
This follows by noting that $\DD zi f(z)= \P(z)^{i/\hc} \del_z \big( \P(z)^{-i/\hc}f(z) \big)$.
\end{proof}
\end{lemma}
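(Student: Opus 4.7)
The plan is essentially to recognize the integrand as the derivative of a single-valued function along $\gamma$ and then invoke the fundamental theorem of calculus on a closed contour.

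First, I would record the logarithmic derivative of $\P$. Since $\P(z)=\prod_{j=1}^N(z-z_j)^{k_j}$ (defined locally via a chosen branch), a direct computation gives
\begin{equation*}
\del_z \log \P(z) = \sum_{j=1}^N \frac{k_j}{z-z_j} = \varphi(z).
\end{equation*}
Thus, for any branch of $\P(z)^{-i/\hc}$ on a simply connected neighborhood of a point of $\gamma$, one has $\del_z \P(z)^{-i/\hc} = -\tfrac{i}{\hc}\varphi(z)\P(z)^{-i/\hc}$.

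Next, I would verify the key identity. Applying Leibniz,
\begin{equation*}
\del_z\bigl(\P(z)^{-i/\hc} f(z)\bigr) = \P(z)^{-i/\hc}\Bigl(f'(z) - \tfrac{i}{\hc}\varphi(z) f(z)\Bigr) = \P(z)^{-i/\hc}\,\DD zi f(z),
\end{equation*}
using the definition \eqref{def: td} of the twisted derivative. Rearranging gives the identity indicated in the statement, and substituting yields
\begin{equation*}
\int_\gamma \P(z)^{-i/\hc}\,\DD zi f(z)\,dz = \int_\gamma \del_z\bigl(\P(z)^{-i/\hc} f(z)\bigr)\,dz.
\end{equation*}

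Finally, I would conclude by the fundamental theorem of calculus. By hypothesis, $\gamma$ is a closed contour in $\CC\setminus\{z_1,\dots,z_N\}$ along which a univalued branch of $\P(z)^{i/\hc}$, hence also of $\P(z)^{-i/\hc}$, has been chosen. Since $f$ is meromorphic and nonsingular on $\gamma$, the primitive $\P(z)^{-i/\hc} f(z)$ is a single-valued holomorphic function in a neighborhood of $\gamma$, so its integral around the closed loop $\gamma$ vanishes.

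There is no real obstacle here; the only point requiring care is the multivaluedness of $\P(z)^{-i/\hc}$, but this is precisely handled by the assumption that $\gamma$ admits a univalued branch, which ensures that the antiderivative is globally defined along $\gamma$ and the boundary contributions cancel.
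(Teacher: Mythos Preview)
Your proof is correct and follows exactly the same idea as the paper's own argument: rewrite the integrand as $\del_z\bigl(\P(z)^{-i/\hc} f(z)\bigr)$ using the identity $\DD zi f(z)=\P(z)^{i/\hc}\del_z\bigl(\P(z)^{-i/\hc} f(z)\bigr)$, and then integrate over the closed contour $\gamma$ along which $\P(z)^{-i/\hc}$ is single-valued. You have simply supplied the details that the paper leaves implicit.
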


\subsection{Quadratic and cubic Hamiltonians}
Suppose $X,Y \in \VV_0^{\bm k}$ are any two states. Consider the formal modes $X_{(0)}, Y_{(0)} \in \tilde U_{\bm k}(\slMhat^{\oplus N})$. One can show that 
\be [X_{(0)}, Y_{(0)}] = (X_{(0)}Y)_{(0)} .\nn\ee 
This is actually a general statement which holds for any vertex algebra. See, for example, \cite[Chapter 4]{FrenkelBenZvi}.\footnote{Note that there the notation for formal modes is \emph{e.g.} $X_{[0]}$, with $X_{(0)}$ reserved for the representative in $\End\VV$.} 

It follows that, given a collection of states in $\VV_0^{\bm k}$ whose zeroth products vanish, the formal zero modes of these states define a commutative subalgebra of the algebra of observables $\tilde U_{\bm k}(\slMhat^{\oplus N})$.
In fact the zeroth products do not even need to vanish for this to be true: one can show that $(TZ)_{(0)} = 0$ for any state $Z$, so it is sufficient that the zeroth products are translates, \emph{i.e.} lie in the image of $T$. 

In our case we have the states $\s_1(z)$ and $\s_2(z)$ which depend rationally on the spectral parameter $z\in \CC\setminus\{z_1,\dots,z_N\}$. Their zeroth products, according to Theorem \ref{thm: S1S2 zeroth prod}, are zero modulo translates \emph{and} twisted derivatives in the spectral parameters. 
Let $\gamma_i$ and $\eta_i$ be any cycles of the twisted homology corresponding to $\DD zi$, for $i\in\{1,2\}$. 
In view of Lemma \ref{lem: stokes}, we have
\begin{equation} \label{com integral si sj}
\bigg[\int_{\gamma_i} \P(z)^{-i/\hc} \s_i(z)_{(0)} dz, \int_{\eta_j} \P(w)^{-j/\hc} \s_j(w)_{(0)} dw \bigg] = 0
\end{equation}
for $i,j\in \{1,2\}$, and we get a commutative subalgebra of  the algebra of observables $\tilde U_{\bm k}(\slMhat^{\oplus N})$, generated by such integrals over twisted cycles. 

However, these are not quite the Hamiltonians we want, because their action on highest weight modules is not diagonalizable. Indeed, recall that the \emph{homogeneous gradation} on $U(\slMhat^{\oplus N})$ is the $\ZZ$-gradation in which $\deg(X^{(i)}_n) = n$. 
%, for $X\in L\slM$, $i\in \{1,\dots,N\}$ and $n\in \ZZ$.    
It induces a $\ZZ_{\leq 0}$-gradation on $\VV_0^{\bm k}$, if we set $\deg(\vackk) = 0$. One can show that if $X\in \VV_0^{\bm k}$ has $\deg(X) = k$ then $\deg(X_{(n)}) = 1 + k +n$. 
Now in our case $\deg(\s_i(z)) = -i-1$, and hence
\be \deg(\s_i(z)_{(0)}) = -i \nn\ee
for $i\in\{1,2\}$. Thus the operators $\int_{\gamma_i} \P(z)^{-i/\hc} \s_i(z)_{(0)} dz\in  \tilde U_{\bm k}(\slMhat^{\oplus N})$ are of degree $-i \neq 0$ in the homogeneous gradation. Consequently if $M$ is any graded module over $\tilde U_{\bm k}(\slMhat^{\oplus N})$ whose subspace of grade $n$ is trivial for sufficiently large $n$ then the operator $\int_{\gamma_i} \P(z)^{-i/\hc} \s_i(z)_{(0)} dz$ has no non-zero generalised eigenvalue in $M$. 
%By induction, starting in the highest non-trivial graded subspace of $M$, the component of the eigenvector in that subspace must vanish

Instead, what we want are commuting operators in $\tilde U_{\bm k}(\slMhat^{\oplus N})$ that are of degree zero in the homogeneous gradation. There is a general procedure to construct such operators, which roughly speaking involves a change of coordinate\footnote{(in the ``loop variable'', $t$, not the ``spectral variable'' $z$)} to go from the plane to the cylinder \cite{Nahm}.  For our present purposes we just recall the following facts: there is a notion of what we shall call the \emph{Fourier modes}, $X_{\fm{n}}\in\tilde U_{\bm k}(\slMhat^{\oplus N})$, of any state $X\in \VV_0^{\bm k}$.
These modes have the property that $X_{\fm{n}}$ always has grade $n$. 
They obey the commutator formula
\be [X_{\fm{m}},Y_{\fm{n}}]  
   = \left(X_{(0)} Y + \sum_{k=1}^\8 \frac{m^k X_{(k)} Y}{k!} \right)_{\fm{m+n}} \label{fmcom}\ee
for any states $X,Y\in \VV_0^{\bm k}$ and any $n,m\in \ZZ$. 
In particular 
\be [X_{\fm{0}},Y_{\fm{0}}] = (X_{(0)}Y)_{\fm{0}}.\nn\ee 
Also, $(TZ)_{\fm{0}} = 0$ for any state $Z\in \VV_0^{\bm k}$. 

One has $(x_{-1}^{(i)}\vackk)_{\fm{n}} = x_n$ for $x\in \slM$ and there is a formula, analogous to the normal ordered product formula \eqref{nord}, which allows one to compute by recursion the Fourier modes of a general state $X\in \VV_0^{\bm k}$. 
(See for example equation (7) in \cite{BLZ}.)
In general the resulting expressions for modes of states are rather intricate but for $\s_1(z)_{\fm{0}}$ and $\s_2(z)_{\fm{0}}$ they are simple: one finds
\begin{align} 
\s_1(z)_{\fm{0}} &=  \frac{\dim(\slM)} {24} \varphi'(z) + \ha I^a_0(z) I^a_0(z) + \sum_{n>0} I^a_{-n}(z) I^a_n(z), \label{S10}\\
\s_2(z)_{\fm{0}} &= \ta t_{abc} \sum_{j,k \geq 0} \Bigl( I^a_{-1-k}(z) I^b_{-1-j}(z) I^c_{2+j+k}(z) + 2I^a_{-1-k}(z) I^b_{1+k-j}(z) I^c_{j}(z) \nn\\
& \hspace{170pt} + I^a_{-j-k}(z) I^b_{k}(z) I^c_{j}(z) \Bigr). \label{S20}
\end{align}  

Let us now define 
\begin{equation} \label{def: Hig}
\hat Q_i^{\gamma} \coloneqq \int_\gamma \P(z)^{-i/\hc} \s_i(z)_{\fm{0}} dz
\end{equation}
for $i\in\{1,2\}$ and for $\gamma$ any cycle of the twisted homology corresponding to $\DD zi$.

Recall the Lie algebra $\g \coloneqq \slMhat \oplus \CC\cocent$ from \S\ref{sec: loop rel slM}. Let $\tilde U(\g^{\oplus N}) \coloneqq \varprojlim U(\g^{\oplus N})/ J^N_n$ be the completion of $U(\g^{\oplus N})$ where for each $n \geq 0$, $J^N_n$ is the ideal of $U(\g^{\oplus N})$ generated by $a^{(i)}_r$ for all $r \geq n$, $a \in \slM$ and $i \in \{ 1, \ldots, N \}$. The quadratic Gaudin Hamiltonians are the elements of $\tilde U(\g^{\oplus N})$ defined as
\begin{equation*}
\mathcal H_i \coloneqq \sum_{\substack{j=1\\ j \neq i}}^N \frac{\cent^{(i)} \cocent^{(j)} + \cocent^{(i)} \cent^{(j)} + \sum_{n\in \ZZ} I^{a(i)}_{-n} I^{a(j)}_n}{z_i - z_j}, \qquad i = 1, \ldots, N.
\end{equation*}
For each $i \in \{ 1, \ldots, N \}$ we also have the $i^{\rm th}$ copy of the quadratic Casimir of $\g$ in $\tilde U(\g^{\oplus N})$, which is defined as
\begin{equation*}
\mathcal C^{(i)} \coloneqq (\cent^{(i)} + M) \cocent^{(i)} + \ha I^{a(i)}_0 I^{a(i)}_0 + \sum_{n > 0} I^{a (i)}_{-n} I^{a(i)}_n.
\end{equation*}
The algebra $\tilde U_{\bm k}(\slMhat^{\oplus N})$, introduced in \S\ref{sec: alg obs}, is isomorphic to the quotient of $\tilde U(\g^{\oplus N})$ by the ideal generated by $\cent^{(i)} - k_i$ and $\mathcal C^{(i)}$ for each $i \in \{ 1, \ldots, N\}$.

Recall the state $s_1(z) \in \VV^{\bm k}_0$ defined in \eqref{straight S1 def}.
\begin{lemma}
The operator $s_1(z)_{\fm{0}}$ is the image in $\tilde U_{\bm k}(\slMhat^{\oplus N})$ of the operator
\begin{equation*}
\sum_{i=1}^N \frac{\mathcal C^{(i)}}{(z - z_i)^2} + \sum_{i=1}^N \frac{\mathcal H_i}{z - z_i} \in \tilde U(\g^{\oplus N}).
\end{equation*}
\begin{proof}
Computing the Fourier zero mode of the state $\s_1(z)$ we find
\begin{align*}
\s_1(z)_{\fm{0}} &= \sum_{i=1}^N \frac{1}{(z-z_i)^2} \bigg( \!\! - \frac{\dim(\slM)}{24} k_i + \ha I^{a(i)}_0 I^{a(i)}_0 + \sum_{n > 0} I^{a(i)}_{-n} I^{a(i)}_n \bigg)\\
&\qquad\qquad\qquad\qquad\qquad + \sum_{i=1}^N \frac{1}{z-z_i} \sum_{\substack{j=1\\ j\neq i}}^N \frac{1}{z_i - z_j} \sum_{n \in \ZZ} I^{a(i)}_{-n} I^{a(j)}_n.
\end{align*}
On the other hand, we also have
\begin{align*}
\hc \DD z1 \omega(z)_{\fm{0}} = - \sum_{i=1}^N \frac{(k_i + \hc) \big( \omega^{(i)} \big)_{\fm{0}}}{(z-z_i)^2} - \sum_{i=1}^N \frac{1}{z-z_i} \sum_{\substack{j=1\\ j\neq i}}^N \frac{k_i \big( \omega^{(j)} \big)_{\fm{0}} + k_j \big( \omega^{(i)} \big)_{\fm{0}}}{z_i - z_j}.
\end{align*}

The result now follows from combining the above and noting that the Fourier zero mode of the Segal-Sugawara state $\omega^{(i)}$, for each $i \in \{ 1, \ldots, N\}$, as defined in \eqref{Seg Sug i}, reads
\begin{equation*}
\big( \omega^{(i)} \big)_{\fm{0}} = \frac{\ha I^{a(i)}_0 I^{a(i)}_0 + \sum_{n > 0} I^{a(i)}_{-n} I^{a(i)}_n}{k_i + M} - \frac{k_i \dim(\slM)}{24 (k_i+ M)}. \qedhere
\end{equation*}
\end{proof}
\end{lemma}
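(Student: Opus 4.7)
The plan is a direct computation: expand $s_1(z)_{\fm{0}} = \s_1(z)_{\fm{0}} + M \DD z1 \omega(z)_{\fm{0}}$ in partial fractions at the marked points $z_1, \ldots, z_N$, separate the double-pole and simple-pole parts, and match them against $\mathcal C^{(i)}/(z-z_i)^2$ and $\mathcal H_i/(z-z_i)$ respectively (modulo the defining ideal $\bigl(\cent^{(i)} - k_i,\, \mathcal C^{(i)}\bigr)$).

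First I would compute $\s_1(z)_{\fm{0}}$ starting from the formula \eqref{S10}. Writing $I^a(z) = \sum_{i=1}^N I^{a(i)}/(z-z_i)$ and $\varphi'(z) = -\sum_i k_i/(z-z_i)^2$, each bilinear $I^a_n(z) I^a_m(z)$ splits into diagonal pieces $(i = j)$ contributing to $(z-z_i)^{-2}$ and off-diagonal pieces $(i \neq j)$ handled by the partial fraction $\frac{1}{(z-z_i)(z-z_j)} = \frac{1}{z_i-z_j}\bigl(\frac{1}{z-z_i} - \frac{1}{z-z_j}\bigr)$. Exploiting commutativity of modes at distinct sites, the off-diagonal Sugawara bilinears assemble into $\sum_{n \in \ZZ} I^{a(i)}_{-n} I^{a(j)}_n$, producing precisely the $I$-mode content of $\mathcal H_i$ at each simple pole. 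This yields the first expression in the proof.

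Next I would expand $M \DD z1 \omega(z)_{\fm{0}} = M \del_z \omega(z)_{\fm{0}} - \varphi(z) \omega(z)_{\fm{0}}$ in the same way: $\del_z$ gives $-\sum_i (\omega^{(i)})_{\fm{0}}/(z-z_i)^2$, while $\varphi(z)\omega(z)_{\fm{0}}$ produces both a diagonal $k_i (\omega^{(i)})_{\fm{0}} / (z-z_i)^2$ term and off-diagonal terms that, after partial fractioning, contribute the $\cent\cocent$-couplings $-\bigl(k_j (\omega^{(i)})_{\fm{0}} + k_i (\omega^{(j)})_{\fm{0}}\bigr)/(z_i-z_j)$ to the residues at $z_i$. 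Combining, the $(z-z_i)^{-2}$ coefficient becomes $-(k_i + M)(\omega^{(i)})_{\fm{0}}$ plus the local Sugawara bilinears from $\s_1(z)_{\fm{0}}$ and the constant anomaly $-k_i \dim(\slM)/24$.

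The core of the argument is then the stated formula for $(\omega^{(i)})_{\fm{0}}$, which I would establish by a short Wick-type computation of the normal ordered mode sum for the Segal-Sugawara state \eqref{Seg Sug i} (or cite from the standard vertex algebra literature). Once that formula is in hand, the double-pole coefficient of $s_1(z)_{\fm{0}}$ is precisely
\[
(k_i + M)\cocent^{(i)} + \tfrac{1}{2} I^{a(i)}_0 I^{a(i)}_0 + \sum_{n>0} I^{a(i)}_{-n} I^{a(i)}_n,
\]
which is the image of $\mathcal C^{(i)}$ after identifying $\cent^{(i)}$ with $k_i$; and the simple-pole coefficient matches $\mathcal H_i$ term by term after recognising $(\omega^{(j)})_{\fm{0}}$ as (minus) $\cocent^{(j)}$ modulo the Casimir ideal.

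The only real subtlety is bookkeeping: one must carefully collect four different sources of $(z-z_i)^{-1}$ terms (diagonal and cross-site contributions from both $\s_1(z)_{\fm{0}}$ and $\varphi(z)\omega(z)_{\fm{0}}$) and verify that the $\cocent$-pieces combine to the expected $k_j \cocent^{(i)} + k_i \cocent^{(j)}$ structure appearing in $\mathcal H_i$. Beyond this accounting, every step is routine, so I expect no serious obstacle.
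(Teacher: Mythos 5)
Your proposal is correct and follows essentially the same route as the paper: both expand $\s_1(z)_{\fm{0}}$ and $M \DD z1 \omega(z)_{\fm{0}}$ in partial fractions at the marked points, invoke the explicit formula for the Fourier zero mode $\big(\omega^{(i)}\big)_{\fm{0}}$ of the Segal--Sugawara state, and then identify the double-pole coefficients with the images of $\mathcal C^{(i)}$ and the simple-pole coefficients with the images of $\mathcal H_i$ modulo the ideal generated by $\cent^{(i)}-k_i$ and $\mathcal C^{(i)}$. No substantive difference in method or in the key intermediate identities.
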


We now have the following corollary of Theorems \ref{thm: S1S2 zeroth prod}, \ref{thm: gsym} and \ref{thm: S1 0 cal Si}.

\begin{theorem}\label{thm: main}
The operators $\hat Q_i^\gamma\in \tilde U_{\bm k}(\slMhat^{\oplus N})$ defined in \eqref{def: Hig} are of grade 0 in the homogeneous gradation. They have the following properties:
\begin{itemize}
  \item[$(i)$] For $i,j\in \{1,2\}$ and any cycles $\gamma$ and $\eta$ of the twisted homology corresponding to $\DD zi$ and $\DD zj$, respectively, we have
\begin{equation*}
\big[ \hat Q_i^\gamma, \hat Q_j^\eta  \big] = 0.
\end{equation*}
  \item[$(ii)$] For all $i \in \{ 1, 2 \}$, any cycle $\gamma$ of the twisted homology corresponding to $\DD zi$ and $j \in \{ 1, \ldots, N \}$ we have
\begin{equation*}
[\mathcal H_j, \hat Q^\gamma_i] = 0.
\end{equation*}
  \item[$(iii)$] For all $i \in \{ 1, 2 \}$, any cycle $\gamma$ of the twisted homology corresponding to $\DD zi$ and all $x\in \slM$ and $n\in \ZZ$, we have
\begin{equation*}
\pushQED{\qed} 
[\Delta x_n, \hat Q_i^\gamma ] = 0. \qedhere
\popQED
\end{equation*}
\end{itemize}
%\begin{proof}
%In view of the above discussion, cf. in particular \eqref{com integral si sj}, part $(i)$ is an immediate consequence of Theorem \ref{thm: S1S2 zeroth prod}.
%
%Part $(ii)$
%
%Part $(iii)$ is a consequence of \eqref{fmcom} and Theorem \ref{thm: gsym}.
%\end{proof}
\end{theorem}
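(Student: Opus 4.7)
The plan is to assemble all three statements from Theorems \ref{thm: S1S2 zeroth prod}, \ref{thm: gsym} and \ref{thm: S1 0 cal Si} via the Fourier-mode commutator identity \eqref{fmcom} together with Lemma \ref{lem: stokes}; the grade-$0$ claim is then immediate, since each $\s_i(z)_{\fm{0}}$ is homogeneous of grade $0$ by construction and integration over $\gamma$ is $\CC$-linear.

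For part $(i)$, I would specialise \eqref{fmcom} at $m=n=0$ to obtain $[\s_i(z)_{\fm{0}}, \s_j(w)_{\fm{0}}] = (\s_i(z)_{(0)} \s_j(w))_{\fm{0}}$. Substituting Theorem \ref{thm: S1S2 zeroth prod}, the $TB_{ij}(z,w)$ contribution drops since $(TZ)_{\fm{0}}=0$, leaving $\bigl(jD^{(i)}_z A_{ij}(z,w) - iD^{(j)}_w A_{ij}(z,w)\bigr)_{\fm{0}}$. I would then integrate against $\P(z)^{-i/\hc}\P(w)^{-j/\hc}$ over $\gamma\times\eta$ and apply Lemma \ref{lem: stokes} to the $z$-integral of the first term and to the $w$-integral of the second. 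The one subtlety is that $A_{ij}(z,w)$ carries an \emph{a priori} pole at $z=w$; here the ``moreover'' half of Theorem \ref{thm: S1S2 zeroth prod} confines this pole to a term $TZ$, which disappears upon passing to the Fourier zero mode, so that choosing $\gamma$ and $\eta$ disjoint ensures $(A_{ij}(z,w))_{\fm{0}}$ is nonsingular on $\gamma$ for every fixed $w\in\eta$ and vice versa.

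For part $(iii)$ the key observation is that $\Delta x_n$ is itself a Fourier mode: setting $X \coloneqq \sum_{i=1}^N x^{(i)}_{-1} \vackk \in \VV_0^{\bm k}$, one has $X_{\fm{n}} = \Delta x_n$ and $X_{(k)} = \Delta x_k$ for all $k\geq 0$. Applying \eqref{fmcom} then gives
\begin{equation*}
[\Delta x_n, \s_i(w)_{\fm{0}}] = \Bigl(\Delta x_0\,\s_i(w) + \sum_{k\geq 1}\tfrac{n^k}{k!}\,\Delta x_k\,\s_i(w)\Bigr)_{\fm{n}}.
\end{equation*}
By Theorem \ref{thm: gsym}, $\Delta x_k\,\s_i(w)$ vanishes for $k=0$ and for $k\geq 2$, and equals $D^{(i)}_w(\cdots)$ for $k=1$; the sum therefore truncates (resolving any convergence worry for negative $n$) to a single term lying in the image of $D^{(i)}_w$. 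Integrating against $\P(w)^{-i/\hc}$ over $\gamma$, pulling the integration inside $(\cdot)_{\fm{n}}$ by linearity, and invoking Lemma \ref{lem: stokes} yields zero. Part $(ii)$ is in essence a reprise of part $(i)$: the preceding lemma identifies $s_1(z)_{\fm{0}}$ with $\sum_{i=1}^N \mathcal C^{(i)}/(z-z_i)^2 + \sum_{i=1}^N \mathcal H_i/(z-z_i)$; the $\mathcal C^{(i)}$ are central and so commute with $\hat Q^\gamma_i$ automatically, while $\mathcal H_j = \res_{z=z_j} s_1(z)_{\fm{0}}\,dz$. I would then run the argument of part $(i)$ verbatim with Theorem \ref{thm: S1 0 cal Si} in place of Theorem \ref{thm: S1S2 zeroth prod} to conclude $[s_1(z)_{\fm{0}}, \hat Q^\gamma_i]=0$, and extract $[\mathcal H_j, \hat Q^\gamma_i]=0$ by taking the residue at $z=z_j$.

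No single step here is a serious obstacle: all of the genuinely difficult computations have been packaged into Theorems \ref{thm: S1S2 zeroth prod}, \ref{thm: gsym} and \ref{thm: S1 0 cal Si}, and the present theorem is essentially a formal corollary. The place demanding the most care, in my view, is the consistent bookkeeping of the two classes of harmless terms---translates $TZ$ that vanish under $(\cdot)_{\fm{0}}$, and twisted derivatives $D^{(\cdot)}_z$ that vanish after twisted integration along $\gamma$---together with the appeal to the ``moreover'' clause of Theorem \ref{thm: S1S2 zeroth prod} to control the diagonal behaviour of $A_{ij}$ at $z=w$ when the two contours cannot be deformed to be disjoint.
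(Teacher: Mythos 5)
Your proposal is correct and follows exactly the route the paper intends: Theorem \ref{thm: main} is stated there as a direct corollary of Theorems \ref{thm: S1S2 zeroth prod}, \ref{thm: gsym} and \ref{thm: S1 0 cal Si}, assembled via the commutator formula \eqref{fmcom}, the vanishing of $(TZ)_{\fm{0}}$, and Lemma \ref{lem: stokes}, with the ``moreover'' clause controlling the diagonal $z=w$. Your bookkeeping of the two classes of harmless terms, and the residue extraction of $\mathcal H_j$ from $s_1(z)_{\fm{0}}$ in part $(ii)$, match the paper's (unwritten) argument.
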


According to the conjecture in \cite{LVY1}, the Hamiltonians $\hat Q_i^{\gamma}$ can be diagonalized by means of the Bethe ansatz and have certain specific eigenvalues encoded by affine opers. For $\hat Q_1^\gamma$ this is known; see \cite[\S5]{LVY1}. In the next two sections we perform a simple Bethe ansatz calculation to check that  the conjecture is correct also for $\hat Q_2^\gamma$ at least for zero and one Bethe roots.

\section{Opers and eigenvalues} \label{sec: op and eval}

When $\g$ is of finite type, it is well known \cite{Fre95, Fopersontheprojectiveline, Fopers} that the spectrum of Hamiltonians for the Gaudin model associated with $\g$ are described in terms of \emph{opers} for the Langlands dual Lie algebra $\lg$, \emph{i.e.} the Lie algebra with transposed Cartan matrix. When $\g$ is of affine type, a series of conjectures was made in \cite{FFsolitons} regarding the description of the spectrum of both local and non-local Hamiltonians for the affine Gaudin model in terms of $\lg$-opers, where $\lg$ is again the Langlands dual of $\g$ with transposed Cartan matrix. Building on the proposal of \cite{FFsolitons}, in \cite{LVY1} we gave an explicit conjecture of how such $\lg$-opers encode the eigenvalues of the \emph{local} higher Hamiltonians. In this section we recall the notion of (Miura) opers from \cite{Fopersontheprojectiveline, FFsolitons} and the corresponding notion of quasi-canonical form introduced in \cite{LVY1}, focusing on the special case $\g = \slMhat$.

Since $\slMhat$ has symmetric Cartan matrix it is self-dual: ${}^L\slMhat \cong \slMhat$. Nevertheless, to keep the general structure in sight and to follow the notation of \cite{LVY1} we prefer to  distinguish the two copies of $\slMhat$. Thus, let $\check e_i$, $\check f_i$ be the Chevalley-Serre generators of the dual copy $\lsln$. They obey the same relations \eqref{KM relations} as the generators $e_i$, $f_i$ above but with the roots and coroots $\alpha_i$ and $\check\alpha_i$ interchanged. The Cartan subalgebra of $\lsln$ is $\lh \coloneqq \h^*$. Let $\lsln=\ln_- \oplus \lh \oplus \ln_+$ be the Cartan decomposition.

\subsection{Opers and Miura opers of type $\lsln$} 
\label{sec: opers}
Pick a derivation element $\rho\in \lh$ such that $[\rho, \check e_i] = \check e_i$, $[\rho,\check f_i] = -\check f_i$ for each $i=0,1,\dots,{M-1}$. Explicitly,
one has
\be \rho \coloneqq M\cocent + \dot\rho \label{def: rho}\ee
where $\dot\rho\coloneqq \sum_{i=1}^{M-1} \omega_i\in \dot\h^*$ is the Weyl vector of $\slM$.

Let $p_{-1} \coloneqq \sum_{i=0}^{{M-1}} \check f_i\in \ln_-$. A \emph{Miura oper} of type $\lsln$ is a connection of the form
\begin{subequations}\label{Mop}
\be d + \left(p_{-1} + u(z)\right) dz \ee 
where $u(z)$ is a meromorphic function valued in $\lh=\h^*$. 
Since $\{\alpha_i\}_{i=0}^{M-1} \cup \{\rho\}$ is a basis of $\lh$ we have 
\be u(z) = \sum_{i=0}^{M-1} u_i(z) \alpha_i - \frac{\varphi(z)}{\hc}\rho \ee
\end{subequations}
for some meromorphic functions $\{u_i(z)\}_{i=0}^{M-1}$ and $\varphi(z)$.
An $\lsln$-\emph{oper} is a gauge equivalence class of connections of the form 
\be d + (p_{-1} + b(z)) dz, \nn\ee
where $b(z)$ is a meromorphic function valued in $\lb_+ = \lh \oplus \ln_+$, under the gauge action of the group ${}^L\!N_+ = \exp(\ln_+)$. 
Any $\lsln$-oper has a \emph{quasi-canonical} representative \cite{LVY1}: namely, by suitable gauge transformation the connection can be brought to the form
\be
\nabla = d + \Bigg( p_{-1} - \frac{\varphi}{\hc} \rho + \sum_{j \in E} v_j p_j  \Bigg) dz,\label{qcf}
\ee
where the sum is over the set
\begin{equation*}
E = \ZZ_{\geq 1} \setminus \hc \ZZ_{\geq 1}
\end{equation*}
of positive \emph{exponents} of $\lsln$, and where for each $j\in E$, $v_j$ is a meromorphic function. For each $j\in \pm E$, $p_j$ is a certain element of $\ln_+$ of grade $j$ in the principal gradation, \emph{i.e.}
\be [\rho, p_j] = j\, p_j, \nn\ee
such that the following commutation relations hold with $p_{-1}$:
\be  [p_{-1},p_j]=\begin{cases}  \,\,\,\,0 & j\in \pm E\setminus\{1\} \\  - \delta & j=1, \end{cases}\nn\ee
where $\delta \coloneqq \sum_{i=0}^{M-1} \alpha_i$ is central in $\lsln$.
It can be shown that, for each $j\in \pm E$, a non-zero such $p_j$ exists and is unique up to normalization.
Let $\a \coloneqq \CC\{p_j\}_{j\in \pm E} \oplus \CC \delta \oplus \CC \rho \subset \lsln$. Then $\a$ is a Lie subalgebra of $\lsln$ called the \emph{principal subalgebra} (the remaining non-trivial commutation relations are $[p_m,p_n] = m\delta_{m+n,0}\delta$). 
%Let $(\lsln)_n\coloneqq \{x\in \lsln : [\rho,x] = n\rho\}$ denote the component of $\lsln$ of grade $n$ in the principal gradation. 
%For each $n\geq 2$, $(\lsln)_n = \a_n \oplus [p_{-1},(\lsln)_{n+1})]$ and $\a_n = \left(\ker\left(\ad_{p_{-1}}\right)\right)_n$.
The normalization of the $p_j$ can be so chosen that the restriction to $\a$ of the bilinear form $(\cdot | \cdot)$ is given by
\begin{equation*}
(\delta | \rho) = (\rho | \delta) = \hc, \qquad (p_m | p_n) = \hc \delta_{m+n,0}, \qquad m, n \in \pm E.
\end{equation*}

The expression for $v_1$ below can be found in \cite[Proposition 3.10]{LVY1}.

\begin{proposition}\label{prop: v1v2}
The coefficients of $p_1$ and $p_2$ of any quasi-canonical form of the $\lsln$-oper defined by the Miura $\lsln$-oper \eqref{Mop} are given by
\begin{align*}
v_1 & = \frac{1}{\hc} \big( \ha (u|u) + \DD z 1 (\rho | u) \big),\\
v_2 & = \frac{1}{\hc} \bigg( \!\! - \sum_{i=0}^{M-1} u_i (u^2_{i+1} - u^2_{i-1})   -\ha \sum_{i=0}^{M-1} u_i (u'_{i+1} - u'_{i-1}) + \DD z 2 f_2 \bigg),
\end{align*}
where $f_2$ is an arbitrary meromorphic function. 
\end{proposition}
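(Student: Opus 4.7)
The plan is to execute the Drinfel'd--Sokolov algorithm explicitly through principal grade $2$. I seek a gauge transformation $g = \exp X$, $X = \sum_{k \geq 1} X_k \in \ln_+$ (decomposed by principal grade), taking \eqref{Mop} to the quasi-canonical form \eqref{qcf}. The gauge-equivalence equation reads
\begin{equation*}
p_{-1} - \tfrac{\varphi}{\hc}\rho + \sum_{j \in E} v_j p_j = e^{\ad X}(p_{-1} + u) - \frac{e^{\ad X} - 1}{\ad X}\partial_z X.
\end{equation*}
Expanding and matching terms of principal grade $k$ gives, for each $k \geq 0$, a linear equation of the form $[p_{-1}, X_{k+1}] + (\mathrm{known})_k = v_{k+1} p_{k+1}$ (with $v_{k+1} \coloneqq 0$ when $k+1 \notin E$). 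Pairing with $p_{-(k+1)}$ and using $(p_{-(k+1)}|[p_{-1}, X_{k+1}]) = 0$ (by invariance) together with $(p_{-(k+1)}|p_{k+1}) = \hc$ extracts $v_{k+1}$; the remaining components of $X_{k+1}$ are uniquely determined modulo $\CC p_{k+1}$ by inverting $\ad p_{-1}$ on a complement of $\bigoplus_{j \in E}\CC p_j$ inside $\ln_+$.

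First I would run this at grade $0$: the equation $[p_{-1}, X_1] = \sum_i u_i \alpha_i$, together with $[\check e_i, p_{-1}] = \alpha_i$, pins down $X_1 = -\sum_i u_i \check e_i$ modulo $\CC p_1$. At grade $1$, the contribution $[X_2, p_{-1}]$ vanishes under pairing with $p_{-1}$, while the surviving terms $\ha[X_1, [X_1, p_{-1}]] + [X_1, u] - \partial_z X_1$ combine, after using \eqref{def: td}, into exactly $\ha(u|u) + D^{(1)}_z(\rho|u)$, reproducing the formula for $v_1$ already obtained in \cite[Proposition~3.10]{LVY1}.

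For $v_2$ I would push the algorithm one step further. First, solve the grade-$1$ equation for $X_2$ modulo $\CC p_2$ by inverting $\ad p_{-1}$ in the explicit basis $\{[\check e_i, \check e_{i+1}]\}_{i \in \ZZ/M\ZZ}$ of the grade-$2$ part of $\ln_+$; this expresses $X_2$ linearly in the $u_i$, $u_i'$ and $\varphi$. Then substitute $X_1$ and $X_2$ into the grade-$2$ equation and pair with $p_{-2}$; a careful simplification, exploiting the cyclic $\ZZ/M\ZZ$ structure of the simple roots, should yield the claimed cubic-in-$u$ and $u u'$ cyclic sums. The $D^{(2)}_z f_2$ remainder encodes the unfixed freedom to add a multiple of $p_2$ to $X_3$: since $[p_{-1}, p_2] = 0$ in $\mathfrak{a}$, such an addition shifts $v_2$ by precisely the twisted derivative of an arbitrary meromorphic function.

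The hard part will be the grade-$2$ bookkeeping. The characteristic index shifts $i \mapsto i \pm 1$ in the final cyclic sums emerge only after combining cubic-in-$u$, $u u'$ and twist-function contributions, all weighted by structure constants coming from commutators like $[[\check e_i, \check e_{i+1}], \check f_j]$ and $[[\check e_i, \check e_{i+1}], p_{-1}]$. Organising the computation so that the cyclic $\ZZ/M\ZZ$ symmetry of the simple roots of ${}^1 A_{M-1}$ is manifest throughout should render the compact form of the answer transparent, and confirm the formula up to the anticipated twisted-derivative ambiguity.
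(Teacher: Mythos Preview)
Your approach is the paper's: gauge-transform by $\exp(m)$, match principal grades, and extract $v_k$ by pairing with $p_{-k}$. Two remarks. First, the paper does not solve for $m_2$: in the grade-$2$ equation, pairing with $p_{-2}$ annihilates $\big[m_3+\ha[m_1,m_2],\,p_{-1}\big]$ (since $[p_{-1},p_{-2}]=0$), and the only surviving $m_2$-dependence is $-\DD z2(p_{-2}|m_2)$, which \emph{is} the arbitrary $\DD z2 f_2$ because the $p_2$-component of $m_2$ is free while its $\c_2$-part pairs to zero with $p_{-2}$. So only the $m_1$-terms need to be computed, and the paper evaluates those directly using the explicit element $p_{-2}=-\sum_i[\check f_i,\check f_{i+1}]$ and the bracket $[m_1,p_{-2}]=-\sum_i u_i(\check f_{i-1}-\check f_{i+1})$, which makes the $i\pm1$ index shifts immediate without ever inverting $\ad p_{-1}$ on grade $2$. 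Second, a slip: the twisted-derivative freedom comes from the unfixed $p_2$-component of $X_2$, not $X_3$ (note $p_2$ has principal grade $2$); your reasoning via $[p_{-1},p_2]=0$ is the right one, but what it explains is why that component of $X_2$ is left undetermined by the grade-$1$ equation.
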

\begin{proof}
Let $g(z) = \exp(m(z)) \in {}^L\! N_+$ with $m$ a meromorphic function valued in $\ln_+$. Let $m = \sum_{n=1}^\8m_n$ be the decomposition of $m$ in the principal gradation.  

Let $\nabla_{\partial_z} = \del_z + p_{-1} + u - \frac{\varphi}{\hc} \rho$, as in \eqref{Mop}.
We demand that
\begin{equation} \label{gauge tr quasi-can}
g \nabla_{\partial_z} g^{-1} = \del_z+ p_{-1} - \frac{\varphi}{\hc} \rho + v_1 p_1 + v_2 p_2 + \dots
\end{equation}
where the dots denote terms of degree at least three in the principal gradation.
In grade 0 we have $u+[m_1, p_{-1}] -  \frac{\varphi}{\hc} \rho = - \frac{\varphi}{\hc} \rho$ and hence
%\begin{equation} \label{ue}
$u = - [m_1,p_{-1}]$.
%\end{equation}
Yet since $u = \sum_{i=0}^{M-1} u_i \alpha_i$, we must set 
\begin{equation*}
m_1 = - \sum_{i=0}^{M-1} u_i\check e_i.
\end{equation*}

Now consider the component of equation \eqref{gauge tr quasi-can} in grade 1. We need
\begin{equation} \label{g1e}
[m_2,p_{-1}] + \ha [m_1,[m_1,p_{-1}]] + [m_1,u] + \frac{\varphi}{\hc} m_1 - m_1' = v_1p_1.
\end{equation}
Consider taking the overlap $(p_{-1}|\cdot)$ of both sides of this equation. Recall the definition of $\DD z j$ from \eqref{def: td}. Since $(p_{-1}|p_1) = \hc$ we find, using the invariance of the bilinear form, that
\begin{align}
\hc v_1 &= -\ha \big( [m_1,p_{-1}] \big| [m_1, p_{-1}] \big) - \big( [m_1,p_{-1}] \big| u \big) - \DD z 1(p_{-1}|m_1) \nn\\
&= \ha (u|u)  
+ \DD z 1 (\rho|u),
%- \DD z 1(p_{-1}|m_1).
\nn\end{align}  
where $(p_{-1}|m_1) = ([p_{-1},\rho]|m_1) = (\rho|[m_1,p_{-1}]) = - (\rho|u)$.  
(The equation \eqref{g1e} also fixes the component of $m_2$ in the orthogonal complement $\c_2$ of $\a_2$, but we won't need that here.) %Finally, we have
%\begin{equation*}
%(p_{-1} | m_1) = - \sum_{i,j=0}^{\hc - 1} u_j (\check f_i | \check e_j) = - \sum_{i=0}^{\hc - 1} u_i = - (\rho | u).
%\end{equation*}

We turn to the component of equation \eqref{gauge tr quasi-can} in grade 2. We have
\begin{align*}
(g\del_z g^{-1})_2 &= -m_2' - \ha [m_1,m_1'],\\
(gp_{-1} g^{-1})_2 &= 
      [m_3,p_{-1}] 
+ \ha [m_1,[m_2,p_{-1}]] 
+ \ha [m_2,[m_1,p_{-1}]] 
+ \sa [m_1,[m_1,[m_1,p_{-1}]]]  \nn\\
&=    [m_3,p_{-1}] 
+ \ha [[m_1,m_2],p_{-1}] 
-  [m_2,u ] 
- \sa [m_1,[m_1,u ]],\\
(gug^{-1})_2 &= [m_2,u] + \ha [m_1,[m_1,u]],\\
- (g \rho g^{-1})_2 &= -[m_2,\rho] - \ha [m_1,[m_1,\rho]] = 2 m_2.
\end{align*}
Hence we need
\begin{equation*}
\big[ m_3 + \ha [m_1,m_2], p_{-1} \big] + \ta [m_1,[m_1,u]] - \ha [m_1,m_1'] - \DD z 2 m_2 = v_2 p_2.
\end{equation*}
Taking $(p_{-2}|\cdot)$ of both sides and then using $(p_{-2} | p_2) = \hc$, the invariance of $(\cdot|\cdot)$ and the relation $[p_{-1},p_{-2}]=0$, we find
\begin{equation*}
\hc v_2 = - \ta \left( \left[m_1,p_{-2}\right]\middle| \left[m_1,u\right] \right) + \ha \left(\left[m_1,p_{-2}\right]\middle| m_1' \right) - \DD z 2 (p_{-2} | m_2).
\end{equation*} 
Now in fact, in type $A$, we have $p_{-2} = -\sum_{i=0}^{M-1}[\check f_i,\check f_{i+1}]$ (addition of indices modulo $\hc$). Hence
\begin{equation*}
[m_1,p_{-2}] = \sum_{i,j=0}^{\hc - 1} u_i [ \check e_i ,[\check f_j,\check f_{j+1}]] = - \sum_{i=0}^{M-1} u_i (\check f_{i-1} - \check f_{i+1})
\end{equation*}
and 
\begin{equation*}
[m_1,u] = - \sum_{i,j=0}^{M-1} u_i u_j [\check e_i, \alpha_j] = \sum_{i,j=0}^{M-1} a_{ij} u_i u_j \check e_i.
\end{equation*}
Since the coefficient of $p_2$ of an $\lsln$-oper in quasi-canonical form is defined only up to twisted derivative, we therefore have
\begin{align*}
\hc v_2 &= \ta \sum_{i,j,k=0}^{M-1} a_{ij} (\delta_{i,k-1} - \delta_{i,k+1}) u_i u_j u_k 
   - \ha \sum_{i,j=0}^{M-1}  (\delta_{i,j-1} - \delta_{i,j+1}) u_i u'_j + \DD z 2 f_2\\
&= - \sum_{i=0}^{M-1} u_i (u^2_{i+1} - u^2_{i-1})   -\ha \sum_{i=0}^{M-1} u_i (u'_{i+1} - u'_{i-1})    
              + \DD z 2 f_2,
\end{align*}
for an arbitrary meromorphic function $f_2$. Here we used the fact that the Chevalley-Serre generators of $\lsln$ are normalised as $(\check e_i | \check f_j) = \delta_{ij}$.
\end{proof}

%\begin{corollary}\label{cor: v2}
%Let $t_{ijk} \coloneqq t(\check \alpha_i,\check\alpha_j,\check\alpha_k)$. We have 
%\begin{equation*}
%\hc v_2 = \ta \sum_{i,j,k=0}^{M-1} t_{ijk}  u_i u_j u_k  - \ha \sum_{i=0}^{M-1} u_i (u'_{i+1} - u'_{i-1}) + \DD z 2 f_2.
%\end{equation*}
%\begin{proof}
%One finds by inspection that $t_{ijk} = \ha \big( a_{ij} u_{ik} + a_{ik} u_{ij} \big)$ with $u_{ij} = \delta_{i,j-1} - \delta_{i,j+1}$.
%\end{proof}
%\end{corollary}

\section{Bethe Ansatz for 0 and 1 roots}\label{sec: BA}

\subsection{Homogeneous vs. principal gradations}
Let us write $\Q(z)\in \tilde U_{\bm k}(\slMhat^{\oplus N})$ for the Fourier zero mode of the state $\s_2(z)$ as in \eqref{S20},
\begin{multline}
\Q(z) \coloneqq \s_2(z)_{\fm{0}} = \ta t_{abc} \sum_{j,k \geq 0} \Bigl( I^a_{-1-k}(z) I^b_{-1-j}(z) I^c_{2+j+k}(z)\\  + 2I^a_{-1-k}(z) I^b_{1+k-j}(z) I^c_{j}(z)) 
 + I^a_{-j-k}(z) I^b_{k}(z) I^c_{j}(z) \Bigr) .\nn \end{multline}
By definition of $\tilde U_{\bm k}(\slMhat^{\oplus N})$, $\Q(z)$ is an infinite sum
\begin{equation*}
\Q(z) = \Q(z)^\homog_0 + \Q(z)^\homog_1 + \Q(z)^\homog_2 + \dots
\end{equation*}
such that, for each $n\geq 1$ we have $\Q(z) = \Q(z)^\homog_0 + \dots + \Q(z)^\homog_{n-1}$ modulo $J_{n}^N$. Recall that $J_n^N$ is the left ideal in $U_{\bm k}(\slMhat^{\oplus N})$ generated by %the subalgebra $(\slMhat^{\oplus N})^\homog_n$ of 
elements of grade at least $n$ in the homogeneous gradation. 
Let $J_{n}^{N,\princ}$ denote the left ideal generated by 
%the subalgebra $(\slMhat^{\oplus N})^\princ_n$ of 
elements of grade at least $n$ in the principal gradation. For each $n \geq 1$, there is a nesting of ideals $J_{\hc n}^{N,\princ} \subset J_n^{N}\subset J_{\hc n - \hc + 1}^{N,\princ}$.
So we could equivalently have defined $\tilde U_{\bm k}(\slMhat^{\oplus N})$ as the completion with respect to the inverse system $U_{\bm k}(\slMhat^{\oplus N}) \big/ J_n^{N,\princ}$.

Let $\b_- \coloneqq \n_- \oplus \h$. Denote by $U_{\bm k}(\b_-^{\oplus N})$ the quotient of $U(\b_-^{\oplus N})$ by the two-sided ideal generated by $\cent^{(i)} - k_i$ for all $i \in \{ 1, \ldots, N \}$, cf. \S\ref{sec: alg obs}. We have the vector space isomorphism $U_{\bm k}(\slMhat^{\oplus N}) \cong_\CC U_{\bm k}(\b_-^{\oplus N}) \otimes U(\n_+^{\oplus N})$. Let $U(\n_+^{\oplus N})_n$ denote the subspace of $U(\n_+^{\oplus N})$ spanned by elements of total degree $n$ in the principal gradation. We have the unique decomposition
\begin{equation} \label{Qsum}
\Q(z) = \Q(z)_0 + \Q(z)_1 + \Q(z)_2 + \ldots
\end{equation}
where $\Q(z)_n \in U_{\bm k}(\b_-^{\oplus N}) \otimes U(\n_+^{\oplus N})_n$.

Recall the definition \eqref{def: rho} of the derivation element $\rho$ which measures the grade in the principal gradation. Let $\check\eta_i\in \h$ be the set of fundamental coweights adapted to the principal gradation; namely, such that $\{\check\eta_i\}_{i=0}^{\hc-1} \cup \{ \cent\}\subset \h$ is the dual basis to  $\{\alpha_i\}^{\hc-1}_{i=0}\cup \{\rho/\hc\}\subset \h^*$:
\be \langle\alpha_i,\check\eta_j\rangle = \delta_{ij}, \quad \langle \rho, \check\eta_j \rangle = 0, \quad \langle \alpha_i, \cent \rangle=0, \quad \langle \rho, \cent \rangle =\hc \nn\ee
for all $i,j\in \{0,1,\dots,{\hc-1}\}$.
\begin{proposition}\label{prop: Q0Q1}
The first two terms in the sum \eqref{Qsum} are 
\begin{align}
\Q(z)_0 &= 
- \sum_{i,j,k=0}^{\hc-1} \check\eta_i(z) \big( \check\eta_{i+1}(z)^2 - \check\eta_{i-1}(z)^2 \big) + \ha  \sum_{i,j=0}^{\hc-1} \check\eta_i(z) \left( \check\eta_{i+1}'(z) - \check\eta_{i-1}'(z) \right),\nn\\
\Q(z)_1 &= \sum_{i,j=0}^{\hc-1} f_i(z) \left(\check \eta_{i+1}(z)- \check\eta_{i-1}(z) \right) e_i(z),  
\end{align} 
modulo twisted derivatives of degree 2.
\end{proposition}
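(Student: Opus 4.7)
The plan is to compute the PBW-ordered form of $\Q(z) = \s_2(z)_{\fm{0}}$ directly from \eqref{S20} and to extract its two lowest components $\Q(z)_0$ and $\Q(z)_1$ in the decomposition \eqref{Qsum}. Two elementary observations organise the calculation. First, the totally symmetric invariant $t_{abc}$ vanishes on any triple of basis elements of $\slM$ other than those of the form $(h, h, h)$, $(h, e_\alpha, e_{-\alpha})$, or $(e_\alpha, e_\beta, e_{-\alpha-\beta})$ (up to permutation). Second, since a mode $X_\alpha \otimes t^n$ has principal grade $\hgt(\alpha) + nM$ and $M \geq 3$, the only grade-one elements of $\n_+^{\slMhat}$ are the Chevalley generators $\{e_i\}_{i=0}^{M-1}$ at each site; in particular $\Q(z)_0$, being of total principal grade zero with no $\n_+$-factor in the PBW splitting, must lie purely in $U(\h^{\oplus N})$.

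To identify $\Q(z)_0$ I would first isolate the all-Cartan contributions, namely $\ta t|_{\h \times \h \times \h}(h^a_p(z), h^b_q(z), h^c_r(z))$ for Cartan indices $a, b, c$. Since the Cartan modes commute up to central pieces $[h^a_p, h^b_q] = p \delta_{p+q, 0}(h^a|h^b)\cent$, PBW reordering at a single point $z$ produces a cubic-in-Cartan monomial plus quadratic corrections involving the central extension; expressing the restriction of $t$ to $\h^{\slM}$ in the basis dual to the simple roots, using $\langle \alpha_i, \check\eta_j\rangle = \delta_{ij}$ together with the explicit form of $t$ on diagonal matrices in type $A$, reproduces $-\sum_i \check\eta_i(z)(\check\eta_{i+1}(z)^2 - \check\eta_{i-1}(z)^2)$, while the central corrections carry a factor of $\varphi'(z)$ and combine after reindexing into the derivative term $\ha \sum_i \check\eta_i(z)(\check\eta'_{i+1}(z) - \check\eta'_{i-1}(z))$. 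Contributions from triples of the form $(h, e_\alpha, e_{-\alpha})$ must then be added: although initially root-valued, the commutator $[e_{\alpha, p}, e_{-\alpha, q}] = (h_\alpha)_{p+q} + p \delta_{p+q, 0}(e_\alpha|e_{-\alpha})\cent$ yields additional Cartan pieces upon PBW reordering, which are to be absorbed into the same expression up to a $\DD z 2$-exact remainder.

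The identification of $\Q(z)_1$ proceeds similarly, but one now retains only the PBW monomials containing a single grade-one generator $e_i^{(j)}$ on the right. The complementary $\b_-^{\oplus N}$-factor of grade $-1$ must then be the product of a Cartan element with a single $f_i^{(k)}$, and the Cartan coefficient is determined by $t(e_i, f_i, h) \propto \langle \alpha_i, h\rangle$; pairing with the basis $\{\check\eta_j\}$ via $\langle \alpha_i, \check\eta_j\rangle = \delta_{ij}$ and using the affine type-$A$ Cartan matrix (which couples $\alpha_i$ only to $\alpha_{i \pm 1}$) then produces exactly the combination $\check\eta_{i+1}(z) - \check\eta_{i-1}(z)$ claimed in the proposition.

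The main obstacle will be the combinatorial bookkeeping of all the PBW commutator corrections: each such correction can a priori contribute to $\Q(z)_0$, $\Q(z)_1$, to higher principal components, or to $\DD z 2$-exact remainders, and the cancellation of the many intermediate terms is itself a nontrivial consistency check. The identities of Lemma \ref{lem: ti}, in particular $f_{ade} f_{bdg} t_{ceg} = -M t_{abc}$ together with the symmetry identity \eqref{tt identity}, are the essential tool for reorganising nested commutators among root modes back into the totally symmetric $t$-structure, so that the final contributions to $\Q(z)_0$ and $\Q(z)_1$ can be collected into the stated closed form.
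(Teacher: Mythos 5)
Your overall strategy---sorting the triples on which $t_{abc}$ is nonzero into the weight-zero types $(\h,\h,\h)$, $(\h,e_\alpha,e_{-\alpha})$, $(e_\alpha,e_\beta,e_{-\alpha-\beta})$, and then tracking the principal grade of the $U(\n_+^{\oplus N})$ factor after PBW reordering---is a sensible way to organise the ``direct calculation'' that the paper leaves implicit, and your structural observations (only Cartan zero-modes, respectively the Chevalley generators $e_i$, can appear in the right-hand factor at grades $0$ and $1$) are correct. However, two of the concrete mechanisms you propose are wrong and would derail the computation if executed as written. First, the derivative term $\ha\sum_i\check\eta_i(\check\eta'_{i+1}-\check\eta'_{i-1})$ in $\Q(z)_0$ cannot come from central corrections in the pure-Cartan sector: those corrections are proportional to $\varphi'(z)$ times a \emph{single} Cartan generator, hence linear in $\h$, whereas the target term is quadratic in $\h$; no reindexing converts one into the other. (In fact the all-Cartan sector produces no reordering corrections to $\Q(z)_0$ at all: the only all-Cartan term of \eqref{S20} whose right-hand PBW factor has grade $0$ is the one with all three modes equal to zero, and Cartan modes commute with Cartan zero-modes exactly.) The derivative term actually originates in the $(\h,e_\alpha,e_{-\alpha})$ sector: by Lemma~\ref{lem: Az relns}, normal-ordering a pair $e_{\alpha,p}(z)\,e_{-\alpha,-p}(z)$ produces the \emph{non-central} commutator $-[e_\alpha,e_{-\alpha}]'(z)$, the derivative of a Cartan-valued rational function, and its product with the remaining Cartan zero-mode gives precisely the $\check\eta\,\check\eta'$ structure. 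You treat this sector as merely ``absorbed \dots up to a $\DD z2$-exact remainder,'' which misses its essential contribution.

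Second, the identity $t(e_i,f_i,h)\propto\langle\alpha_i,h\rangle$ is false: with $e_i=E_{i,i+1}$ and $f_i=E_{i+1,i}$ one finds $t(h,e_i,f_i)=\tr\big((E_{ii}+E_{i+1,i+1})h\big)=h_{ii}+h_{i+1,i+1}$, whereas $\langle\alpha_i,h\rangle=h_{ii}-h_{i+1,i+1}$; the proportionality you assert is the one satisfied by the \emph{antisymmetric} invariant $f_{abc}$, not by $t_{abc}$. Moreover, even granting your premise, dualising $\alpha_i$ through the Cartan matrix would produce the combination $2\check\eta_i-\check\eta_{i+1}-\check\eta_{i-1}$, which is symmetric under $i+1\leftrightarrow i-1$ and so can never equal the antisymmetric combination $\check\eta_{i+1}-\check\eta_{i-1}$ appearing in $\Q(z)_1$. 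The correct mechanism is that the trace form dualises the functional $h\mapsto h_{ii}+h_{i+1,i+1}$ to the coweight $\check\omega_{i+1}-\check\omega_{i-1}$ (equal to $\check\eta_{i+1}-\check\eta_{i-1}$ up to central terms), which is where the stated combination comes from. Both points are repairable, but as written your outline would not reproduce the formulas of the proposition.
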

\begin{proof} By direct calculation.\end{proof}

\subsection{Spin chain $\MM_{\bm \lambda}$}
For each $i\in \{1,\dots,N\}$ let $\lambda_i\in \h^*$ be a weight of $\slMhat$ such that
\be \langle \lambda_i,\cent \rangle = k_i .\label{lk}\ee
Let $\MM_{\lambda_i}$ denote the Verma module over $\g$ of highest weight $\lambda_i$ and define 
\be \MM_{\bm \lambda} \coloneqq \MM_{\lambda_1} \ox \dots \ox \MM_{\lambda_N}.\nn\ee
It is a smooth module over $\slMhat^{\oplus N}$ of level $\bm k$, and therefore a module over the algebra of observables $\tilde U_{\bm k}(\slMhat^{\oplus N})$. 
Let $\vacl$ denote the highest weight vector in $\MM_{\bm \lambda}$.

\subsection{Vacuum eigenvalues}
For this subsection, consider the Miura $\lsln$-oper \eqref{Mop}, with
\be u(z)  = u^{(0)}(z) \coloneqq - \sum_{j=1}^N \frac{\lambda_j}{z-z_j}.\nn\ee
That is, $u_i(z) = - \sum_{j=1}^N \frac{\langle\lambda_j,\check \eta_i\rangle}{z-z_j}$ for each $i\in \{0,1,\dots,{\hc-1}\}$, and $\varphi(z)$ is the twist function as in \eqref{tdef}, by virtue of \eqref{lk}. 
Let $v_1(z)$ and $v_2(z)$ be the coefficients of a quasi-canonical form of the underlying $\lsln$-oper, as in Proposition \ref{prop: v1v2}. 
\begin{proposition}
Up to twisted derivatives of degree 2, the vacuum eigenvalue of $\Q(z)$ is $- \hc v_2(z)$, \emph{i.e.}
\begin{equation*}
\Q(z) \vacl = - \hc v_2(z) \vacl + \DD z2 | \varepsilon(z) \rangle
\end{equation*}
for some vector $| \varepsilon(z) \rangle \in \MM_{\bm \lambda}$ depending rationally on $z$. In particular, for any cycle $\gamma$ in the twisted homology corresponding to $\DD z2$ we have
\begin{equation*}
\hat Q^\gamma_2 \vacl = - \hc \int_\gamma \P(z)^{-2/\hc} v_2(z) dz \; \vacl.
\end{equation*}
\end{proposition}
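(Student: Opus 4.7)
The plan is to apply Proposition \ref{prop: Q0Q1} to restrict to the principal-grade-zero part of $\Q(z)$, evaluate that part on $\vacl$ in closed form, and then identify the resulting scalar with $-\hc v_2(z)$ using Proposition \ref{prop: v1v2}.

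The first observation is that $\vacl$ is a tensor product of $\g$-highest weight vectors, hence is annihilated by every element of $U(\n_+^{\oplus N})$ of positive principal grade. In the decomposition $\Q(z) = \sum_{n\geq 0} \Q(z)_n$ with $\Q(z)_n \in U_{\bm k}(\b_-^{\oplus N}) \otimes U(\n_+^{\oplus N})_n$, this kills $\Q(z)_n \vacl$ for every $n \geq 1$. In particular the explicit grade-1 formula in Proposition \ref{prop: Q0Q1} acts as zero through its rightmost factor $e_i(z)$. Thus
\[
\Q(z)\vacl = \Bigl[ -\sum_{i=0}^{M-1} \check\eta_i(z)\bigl(\check\eta_{i+1}(z)^2 - \check\eta_{i-1}(z)^2\bigr) + \ha \sum_{i=0}^{M-1} \check\eta_i(z)\bigl(\check\eta_{i+1}'(z) - \check\eta_{i-1}'(z)\bigr) \Bigr]\vacl + \DD z 2 |\tilde\varepsilon(z)\rangle,
\]
for some $|\tilde\varepsilon(z)\rangle \in \MM_{\bm\lambda}$ rational in $z$, absorbing the `modulo twisted derivatives' corrections.

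Next I would use that each $\check\eta_i$ lies in $\h$ and acts diagonally on $\vacl$. Combining \eqref{lk} with the dualities $\langle\alpha_k,\check\eta_i\rangle=\delta_{ki}$ and $\langle\rho,\check\eta_i\rangle=0$ gives $\check\eta_i(z)\vacl = -u_i(z)\vacl$ for $u = u^{(0)}$. Substituting $\check\eta_i(z)\mapsto -u_i(z)$ in the bracket above produces precisely $\sum_i u_i(u_{i+1}^2 - u_{i-1}^2) + \ha\sum_i u_i(u_{i+1}' - u_{i-1}')$, which by Proposition \ref{prop: v1v2} equals $-\hc v_2(z) + \DD z 2 f_2(z)$. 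Folding $f_2(z)\vacl$ into $|\tilde\varepsilon(z)\rangle$ yields the desired $|\varepsilon(z)\rangle \in \MM_{\bm\lambda}$ rational in $z$, and the first assertion is proved.

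The integrated statement is then immediate: multiplying by $\P(z)^{-2/\hc}$ and integrating along $\gamma$, Lemma \ref{lem: stokes} applied component-wise in a basis of $\MM_{\bm\lambda}$ kills the $\DD z 2$ contribution, leaving $\hat Q_2^\gamma \vacl = -\hc \int_\gamma \P(z)^{-2/\hc} v_2(z)\,dz\,\vacl$. There is no substantive obstacle; the entire argument is routine bookkeeping. The one subtlety worth highlighting is the coherent combination of the two independent $\DD z 2$-ambiguities that enter the calculation, namely the one inherent in Proposition \ref{prop: Q0Q1} (reducing $\Q(z)$ modulo twisted derivatives) and the freedom $\DD z 2 f_2$ in the quasi-canonical form of the oper; both are absorbed into the single rational vector $|\varepsilon(z)\rangle$.
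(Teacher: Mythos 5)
Your proposal is correct and follows essentially the same route as the paper's own proof: kill $\Q(z)_n\vacl$ for $n\geq 1$ using highest-weight annihilation, evaluate $\check\eta_i(z)\vacl=-u_i(z)\vacl$, match the grade-zero expression of Proposition \ref{prop: Q0Q1} against the formula for $v_2$ in Proposition \ref{prop: v1v2}, and dispose of the $\DD z2$ terms under the integral via Lemma \ref{lem: stokes}. The only difference is that you spell out the bookkeeping (including the sign check and the absorption of the two $\DD z2$-ambiguities) that the paper leaves implicit.
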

\begin{proof} We have $\Q(z)_n\vacl = 0$ for all $n\geq 1$, \emph{i.e.} $\Q(z) \vacl = \Q(z)_0\vacl$. By definition of the $\check{\eta}_i$, 
\begin{equation*}
\check \eta_i(z) \vacl = - \langle u(z), \check \eta_i \rangle \vacl = - u_i(z) \vacl
\end{equation*}
and then the statement follows from Propositions \ref{prop: v1v2} and \ref{prop: Q0Q1}.
\end{proof}
\subsection{Eigenvalues at depth 1}
Let $w\in \CC\setminus\{z_1,\dots,z_N\}$ be an additional point in the complex plane, distinct from the marked points $z_1,\dots,z_N$. Pick $n\in\{0,1,\dots, \hc-1\}$. Now, for this subsection, we consider the Miura $\lsln$-oper, \eqref{Mop}, with
\begin{align} u(z) &= u^{(0)}(z) + \frac{\alpha_n}{z-w} 
%\\&
= - \sum_{j=1}^N \frac{\lambda_i}{z-z_i} + \frac{\alpha_n}{z-w}\label{u1}\end{align}
and let $v_1(z)$ and $v_2(z)$ be the coefficients of a quasi-canonical form of its underlying $\lsln$-oper, as in Proposition \ref{prop: v1v2}.

The \emph{weight function} or \emph{Schechtman-Varchenko vector} for one Bethe root of colour $n\in\{0,1,\dots,{M-1}\}$ is by definition $f_n(w) \vacl$, where
\begin{equation*}
f_n(w) = \sum_{j=1}^N \frac{f_n^{(i)}}{w-z_i}
\end{equation*}
using the notation \eqref{def: Az}.

\begin{proposition}
Suppose $w$ obeys the Bethe equation
\begin{equation*}
0= \sum_{j=1}^N \frac{\langle \lambda_j, \check\alpha_n\rangle}{w-z_j}.
\end{equation*}
Then, up to twisted derivatives of degree 2, the weight function $f_n(w) \vacl$ is an eigenstate of $\Q(z)$ with eigenvalue $- \hc v_2(z)$, \emph{i.e.}
\begin{equation*}
\Q(z) f_n(w) \vacl = - \hc v_2(z) f_n(w) \vacl + \DD z2 |\tilde \varepsilon(z) \rangle,
\end{equation*}
for some vector $| \tilde \varepsilon(z) \rangle \in \MM_{\bm \lambda}$ depending rationally on $z$.
In particular, for any cycle $\gamma$ in the twisted homology corresponding to $\DD z2$ we have
\begin{equation*}
\hat Q^\gamma_2 f_n(w) \vacl = - \hc \int_\gamma \P(z)^{-2/\hc} v_2(z) dz \; f_n(w) \vacl.
\end{equation*}
\end{proposition}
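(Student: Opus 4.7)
The plan is to follow the strategy of the vacuum eigenvalue computation of the previous proposition, adapted to the excited state $f_n(w)|\bm\lambda\rangle$. The first step is to argue that in the decomposition $\Q(z) = \sum_{m \geq 0} \Q(z)_m$ of \eqref{Qsum}, only the components $\Q(z)_0$ and $\Q(z)_1$ give non-zero contributions to $\Q(z) f_n(w)|\bm\lambda\rangle$. Each $\Q(z)_m$ has its $\n_+^{\oplus N}$ part of principal degree $m$; commuting it past the single factor $f_n(w) \in \n_-$ can absorb at most one unit of positive principal degree, via $[e_i(z), f_n(w)] \propto \delta_{in} \check\alpha_n$ (or via $[e_\alpha, f_n] \propto e_{\alpha - \alpha_n} \in \n_+$ for a non-simple positive root $\alpha$). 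Hence for $m \geq 2$ at least one raising element survives and annihilates $|\bm\lambda\rangle$, after moving any residual Cartan elements to the right (which only produces multiples of the same raising element).

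Next, I would split
\begin{equation*}
\Q(z) f_n(w)|\bm\lambda\rangle = f_n(w)\, \Q(z)|\bm\lambda\rangle + \big[\Q(z)_0 + \Q(z)_1,\, f_n(w)\big]\,|\bm\lambda\rangle.
\end{equation*}
Since $\DD z 2$ acts only in $z$, it commutes with $f_n(w)$, so the first piece equals $-\hc v_2^{(0)}(z) f_n(w)|\bm\lambda\rangle + \DD z 2 \big( f_n(w)|\varepsilon(z)\rangle \big)$ by the previous proposition, where $v_2^{(0)}$ denotes the eigenvalue obtained from the unshifted Miura connection $u^{(0)}$. The commutator is then computed directly from the explicit formulae of Proposition \ref{prop: Q0Q1}, using
\begin{equation*}
[\check\eta_i(z), f_n(w)] = -\delta_{in}\,\frac{f_n(w) - f_n(z)}{z-w}, \qquad [e_i(z), f_n(w)]\,|\bm\lambda\rangle = \delta_{in}\, \frac{\check\alpha_n(z) - \check\alpha_n(w)}{w-z}\,|\bm\lambda\rangle,
\end{equation*}
together with $e_i(z)|\bm\lambda\rangle = 0$. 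After rearrangement this yields a rational function of $z,w$ whose coefficients are products of the Cartan fields $\check\eta_j(z), \check\eta_j(w)$ acting on $|\bm\lambda\rangle$, multiplied by $f_n(z)$ or $f_n(w)$.

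The third step is to match the commutator output with $-\hc\big[v_2(z) - v_2^{(0)}(z)\big]\, f_n(w)|\bm\lambda\rangle$, modulo $\DD z 2$ of a rational $\MM_{\bm\lambda}$-valued function and terms proportional to the Bethe equation. To this end, one computes $v_2(z) - v_2^{(0)}(z)$ by substituting $u_i(z) = u_i^{(0)}(z) + \delta_{in}/(z-w)$ into the formula of Proposition \ref{prop: v1v2} and expanding; acting on $|\bm\lambda\rangle$ and invoking $\check\eta_j(z)|\bm\lambda\rangle = -u_j^{(0)}(z)|\bm\lambda\rangle$ then puts both sides in the same form. Any leftover "unwanted" terms are proportional to $\check\alpha_n(w)|\bm\lambda\rangle = \sum_{j=1}^N \langle\lambda_j, \check\alpha_n\rangle/(w-z_j)\,|\bm\lambda\rangle$, which vanishes by hypothesis. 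The integrated statement for $\hat Q_2^\gamma$ then follows at once from Lemma \ref{lem: stokes} by integrating against $\P(z)^{-2/\hc}$ along $\gamma$.

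The main obstacle is the third step: the systematic bookkeeping required to separate the true eigenvalue contribution, the $\DD z 2$-exact pieces, and the residual Bethe-equation terms. This relies on repeated application of the partial fraction identity $1/[(z-z_a)(w-z_a)] = \big[1/(w-z_a) - 1/(z-z_a)\big]/(z-w)$ and on exploiting the freedom in the meromorphic function $f_2$ of Proposition \ref{prop: v1v2} to absorb excess $\DD z 2$-exact pieces into the ambiguity of the quasi-canonical form. It is the cubic analogue of the standard one-Bethe-root computation done in \cite{LVY1} for $\hat Q_1^\gamma$, but is combinatorially more intricate due to the cubic nonlinearity of $v_2$ in the $u_i$'s.
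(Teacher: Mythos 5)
Your proposal is correct and follows essentially the same route as the paper's proof: split off the vacuum contribution, compute $[\Q(z)_0,f_n(w)]\vacl$ and $[\Q(z)_1,f_n(w)]\vacl$ using the commutation relations of Lemma \ref{lem: Az relns}, identify the first piece as the correction $-\hc\big(v_2(z)-v_2^{(0)}(z)\big)$ from shifting $u^{(0)}$ by $\alpha_n/(z-w)$, and observe that the residual term is proportional to $\langle u^{(0)}(w),\check\alpha_n\rangle$ and hence vanishes by the Bethe equation. Your preliminary argument that $\Q(z)_m$ for $m\geq 2$ annihilates $f_n(w)\vacl$ is left implicit in the paper but is a correct and worthwhile addition.
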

\begin{proof} 
By definition $[\check\eta_i, f_n] = -f_n \langle \alpha_n,\check\eta_i\rangle = -\delta_{in} f_n$.
Using this and Lemma \ref{lem: Az relns}, one computes, starting from the expressions for $\Q(z)_0$ in Proposition \ref{prop: Q0Q1},
\begin{align*}
&[\Q(z)_0, f_n(w) ] \vacl = \Bigg( \!- \sum_{i=0}^{M-1} a_{ni} u^{(0)}_i(z) \frac{u^{(0)}_{n+1}(z) -  u^{(0)}_{n-1}(z)}{z-w}\\
&\qquad\qquad\qquad\qquad\qquad - \frac{u^{(0)}_{n+1}(z) - u^{(0)}_{n-1}(z) }{2 (z-w)^2} + \frac{u^{(0)'}_{n+1}(z) - u^{(0)'}_{n-1}(z) }{2(z-w)}\Bigg) f_n(w) \vacl\\
&\qquad\quad + \left(\sum_{i=0}^{M-1} a_{ni} u^{(0)}_i(z) \frac{u^{(0)}_{n+1}(z) -  u^{(0)}_{n-1}(z)}{z-w}
    - \del_z   \frac{u^{(0)}_{n+1}(z) - u^{(0)}_{n-1}(z) }{z-w}\right) f_n(z) \vacl.
\end{align*}
Here one recognizes the first term on the right hand side as the required correction to the vacuum value of $- \hc v_2(z)$, cf. \eqref{u1} and Proposition \ref{prop: v1v2}. Next, one finds
\begin{align*}
[\Q(z)_1,f_n(w)] \vacl & = \sum_{i=0}^{\hc-1} \left[f_i(z) \left(\check \eta_{i+1}(z)- \check\eta_{i-1}(z) \right) e_i(z), f_n(w) \right] \vacl\\
&= - f_n(z) \left(\check \eta_{n+1}(z)- \check\eta_{n-1}(z) \right) \frac{\check\alpha_n(z) - \check\alpha_n(w)}{z-w} \vacl\\
&= - f_n(z) \vacl \big(u^{(0)}_{n+1}(z) - u^{(0)}_{n-1}(z)\big) \frac{ \langle u^{(0)}(z),\check\alpha_n\rangle  - \langle u^{(0)}(w) , \check\alpha_n\rangle }{z-w}.
\end{align*}
Now we have
\begin{equation*}
\langle u(z)^{(0)}, \check\alpha_n\rangle = - \frac{\varphi(z)}{\hc} \langle \rho,\check\alpha_n\rangle + \sum_{i=0}^{\hc-1} \langle \alpha_i,\check\alpha_n\rangle  u^{(0)}_i(z) = - \frac{\varphi(z)}{\hc} + \sum_{i=0}^{\hc-1} a_{ni} u^{(0)}_i(z).
\end{equation*}
%and one verifies that 
%\begin{equation*}
%\big(u^{(0)}_{n+1}(z) - u^{(0)}_{n-1}(z)\big)\sum_{i=0}^{M-1} a_{ni} u^{(0)}_i(z) = \sum_{i,j=0}^{\hc-1} t_{nij} u^{(0)}_i(z) u^{(0)}_j(z).
%\end{equation*}
Hence, one arrives at
\begin{align*}
\Q(z) f_n(w) \vacl &= - \hc v_2(z) f_n(w) \vacl
- \DD z 2 \Big( \ha \big( u^{(0)}_{n+1}(z) - u^{(0)}_{n-1}(z) \big) \frac{f_n(z)}{z-w} \vacl \Big)\\
&\qquad\qquad\qquad + \langle u^{(0)}(w) ,\check\alpha_n\rangle \big( u^{(0)}_{n+1}(z) - u^{(0)}_{n-1}(z) \big) \frac{f_n(z)}{z-w} \vacl.
\end{align*}
The second line on the right hand side vanishes by the Bethe equation. 
\end{proof}

\section{Two-point case, coset construction and the W3 algebra}\label{sec: two-point}

%In general, higher Hamiltonians for Gaudin models of affine type were previously unknown. However, as explained in \cite[\S6.4]{FFsolitons}, in the special case of two-point Gaudin models, there are natural candidates coming from the GKO coset construction. 

In this section we specialize to the case of $N=2$ marked points. For convenience, let us choose them to be $z_1 = 0$ and $z_2=1$. 
Let $\gamma$ denote a Pochhammer contour around these two points.  
For $a\in \CC$ and $n\in \ZZ_{\geq 0}$ let $(a)_n$ denote the falling factorial
\be \ff a n \coloneqq \prod_{k=0}^{n-1} (a-k) = a(a-1)\dots (a-n+1)\nn\ee
with $(a)_0 \coloneqq 1$. 
For $a,b\in \CC$, define 
\be B(a,b) \coloneqq \int_\gamma z^a (z-1)^b dz .\nn\ee
\begin{lemma} \label{lem: beta}
\be B(a-1,b) = \frac{a+b+1}{a} B(a,b), \qquad B(a,b-1) = -\frac{a+b+1}{b} B(a,b). \nn\ee 
\end{lemma}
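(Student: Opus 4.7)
The plan is to derive both identities by integration by parts along $\gamma$, exploiting the feature of a Pochhammer contour about $\{0,1\}$ that the multivalued integrand $z^p(z-1)^q$ returns to its original branch after one traversal, so is effectively single-valued along $\gamma$ for every $p,q\in\CC$. Concretely, this is a special case of Lemma \ref{lem: stokes} (with $N=2$ marked points at $0$ and $1$ and twist function $\varphi(z)=a/z+b/(z-1)$), which yields
\begin{equation*}
\int_\gamma \frac{d}{dz}\bigl[z^p(z-1)^q\bigr]\, dz = 0 \qquad \text{for every } p,q\in\CC.
\end{equation*}

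For the first identity I would apply this with $p=a$, $q=b+1$. Expanding the derivative gives $a z^{a-1}(z-1)^{b+1} + (b+1) z^a(z-1)^b$, so the vanishing reads
\begin{equation*}
a\, B(a-1,b+1) + (b+1)\, B(a,b) = 0.
\end{equation*}
The trivial algebraic rewriting $(z-1)^{b+1} = z(z-1)^b - (z-1)^b$ under the integral sign produces $B(a-1,b+1) = B(a,b) - B(a-1,b)$. Substituting and solving for $B(a-1,b)$ gives $B(a-1,b) = \frac{a+b+1}{a} B(a,b)$, as required.

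The second identity is handled symmetrically. Starting from the vanishing of the integral of $\frac{d}{dz}[z^{a+1}(z-1)^b]$, one obtains $(a+1) B(a,b) + b\, B(a+1,b-1) = 0$. The relation $z=(z-1)+1$ gives $B(a+1,b-1) = B(a,b) + B(a,b-1)$ under the integral, and rearranging yields $B(a,b-1) = -\frac{a+b+1}{b} B(a,b)$.

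The only genuinely non-trivial ingredient is the vanishing of the boundary contribution in the integration by parts; this is the entire topological reason Pochhammer contours were introduced, and it is precisely what Lemma \ref{lem: stokes} encodes in the present framework. Once that is in hand, everything reduces to the two algebraic identities $(z-1)^{b+1} = z(z-1)^b - (z-1)^b$ and $z(z-1)^{b-1} = (z-1)^b + (z-1)^{b-1}$, so I anticipate no real obstacle in the argument.
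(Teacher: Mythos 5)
Your proof is correct and follows essentially the same route as the paper: the paper likewise combines the integration-by-parts relation $aB(a-1,b)+bB(a,b-1)=0$ (valid because $\gamma$ is closed) with the three-term algebraic relation $B(a-1,b-1)-B(a,b-1)+B(a-1,b)=0$ coming from $\tfrac{1}{z(z-1)}-\tfrac{1}{z-1}+\tfrac{1}{z}=0$, which is exactly your identity $(z-1)^{b+1}=z(z-1)^b-(z-1)^b$ in disguise. The only difference is that you apply these relations at shifted parameter values and solve directly, rather than at $(a,b)$ and then eliminating; this is a purely cosmetic distinction.
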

\begin{proof} Since the contour $\gamma$ has no boundary, $aB(a-1,b) + bB(a,b-1) = 0$. From the identity $\frac{1}{z(z-1)} - \frac{1}{z-1} + \frac 1 z=0$, one sees that $B(a-1,b-1) - B(a,b-1) + B(a-1,b) = 0$. The result follows. \end{proof} 

Define
\begin{align*}
\T &\coloneqq \T^{(1)} + \T^{(2)} - \T^{(diag)}\\
&\, = \frac{1}{2(k_1+\hc)} I^{a(1)}_{-1}  I^{a(1)}_{-1} \vackk  +\frac{1}{2(k_2+\hc)} I^{a(2)}_{-1}  I^{a(2)}_{-1} \vackk \\
&\qquad\qquad\qquad\qquad -  \frac{1}{2(k_1+k_2+\hc)} (I^{a(1)}_{-1}+I^{a(2)}_{-1}) (I^{a(1)}_{-1}+I^{a(2)}_{-1})   \vackk.
\end{align*}
Define also the state $\W \in  \VV_0^{\bm k}$ by 
\begin{align*}
\W &\coloneqq C(k_1,k_2) \bigg(\ta t_{abc} I^{a(1)}_{-1} I^{b(1)}_{-1} I^{c(1)}_{-1} \vackk  \left(-\aaa 2\right) \left(-\aaa 2 - 1\right) \left(-\aaa 2 -2\right) \\
&\quad\qquad\qquad\,\,\,\,{}- t_{abc} I^{a(1)}_{-1} I^{b(1)}_{-1} I^{c(2)}_{-1} \vackk  \left(-\aaa 1 -2\right) \left(-\aaa 2 - 1\right) \left(-\aaa 2 -2\right) \\
&\quad\qquad\qquad\,\,\,\,{}+ t_{abc} I^{a(1)}_{-1} I^{b(2)}_{-1} I^{c(2)}_{-1} \vackk  \left(-\aaa 1 - 1\right)\left(-\aaa 1 -2\right) \left(-\aaa 2 - 2\right)  \\
&\quad\qquad\qquad{}- \ta t_{abc} I^{a(2)}_{-1} I^{b(2)}_{-1} I^{c(2)}_{-1} \vackk  \left(-\aaa 1\right) \left(-\aaa 1 - 1\right)\left(-\aaa 1 -2\right)\bigg),
\end{align*}
where the normalization factor $C(k_1,k_2)$ is given by
\begin{align*} C(k_1,k_2) &\coloneqq -\frac{M^3}{4}\frac{1}{(k_1+M)(k_2+M)(k_1+k_2+M)} D(k_1,k_2) ,\\
   D(k_1,k_2)^2 &\coloneqq \frac{-M}{ 2(M+2k_1)(M+2k_2)(3M+ 2k_1+2k_2)(M^2-4)}. \nn\end{align*}
\begin{proposition}\label{prop: TW}
Up to normalization factors depending only on the levels $k_1,k_2$, the states $\T$ and $\W$ are equal to $\int_\gamma \P(z)^{-1/\hc} \s_1(z) dz$ and $\int_\gamma \P(z)^{-2/\hc} \s_2(z) dz $ respectively. Namely,
\begin{align} \T &= \frac{ k_1k_2  } {(k_1+k_2+\hc)(k_1+k_2)(k_1+k_2-\hc)} 
\frac{\int_\gamma \P(z)^{-1/\hc} \s_1(z) dz }{{\int_\gamma \P(z)^{-1/\hc}dz }}, \nn\\
 \W &= \qquad\frac{C(k_1,k_2) (-\aaa1)_3 (-\aaa2)_3}{(-\aaa1-\aaa2 +1)_3}\qquad 
\frac{\int_\gamma \P(z)^{-2/\hc} \s_2(z) dz }{{\int_\gamma \P(z)^{-2/\hc}dz }}.\nn
\end{align}
\end{proposition}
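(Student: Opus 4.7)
The plan is a direct expansion–integration argument. Specialising the definitions of $\s_i(z)$ to $N=2$ with $z_1=0$ and $z_2=1$, substitute $I^a_{-1}(z) = \frac{I^{a(1)}_{-1}}{z} + \frac{I^{a(2)}_{-1}}{z-1}$. Since $[I^{a(1)}_{-1}, I^{b(2)}_{-1}]=0$ and the tensors $\delta_{ab}, t_{abc}$ are symmetric, the resulting monomials can be grouped by the multiset of site labels they carry. For $\s_1(z)$ this produces three terms with rational prefactors $z^{-2}$, $(z(z-1))^{-1}$, $(z-1)^{-2}$ (with weights $\tfrac{1}{2}, 1, \tfrac{1}{2}$); for $\s_2(z)$, four terms with prefactors $z^{-3}$, $z^{-2}(z-1)^{-1}$, $z^{-1}(z-1)^{-2}$, $(z-1)^{-3}$ and multinomial weights $\tfrac{1}{3}, 1, 1, \tfrac{1}{3}$.

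Setting $a_0 \coloneqq -ik_1/\hc$ and $b_0 \coloneqq -ik_2/\hc$, integration against $\P(z)^{-i/\hc} = z^{a_0}(z-1)^{b_0}$ turns each such prefactor into a Beta integral $B(a_0 - p, b_0 - q)$ with $0 \leq p+q \leq i+1$. By iterated application of Lemma \ref{lem: beta}, each such integral is an explicit rational function of $k_1, k_2, \hc$ times $B(a_0, b_0) = \int_\gamma \P(z)^{-i/\hc} dz$.

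For $i = 1$, the three nontrivial ratios $B(a_0-2,b_0)/B(a_0,b_0)$, $B(a_0-1,b_0-1)/B(a_0,b_0)$, $B(a_0,b_0-2)/B(a_0,b_0)$ evaluate to $\tfrac{(k_1+k_2)(k_1+k_2-\hc)}{k_1(k_1+\hc)}$, $-\tfrac{(k_1+k_2)(k_1+k_2-\hc)}{k_1 k_2}$, $\tfrac{(k_1+k_2)(k_1+k_2-\hc)}{k_2(k_2+\hc)}$. Expanding the diagonal term in $\T$ and collecting, the coefficients of $I^{a(1)}_{-1}I^{a(1)}_{-1}\vackk$, $I^{a(1)}_{-1}I^{a(2)}_{-1}\vackk$, $I^{a(2)}_{-1}I^{a(2)}_{-1}\vackk$ read $\tfrac{k_2}{2(k_1+\hc)(k_1+k_2+\hc)}$, $-\tfrac{1}{k_1+k_2+\hc}$, $\tfrac{k_1}{2(k_2+\hc)(k_1+k_2+\hc)}$. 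Dividing term by term yields the common scalar $\tfrac{k_1 k_2}{(k_1+k_2+\hc)(k_1+k_2)(k_1+k_2-\hc)}$ claimed in the proposition. For $i=2$, iterated use of Lemma \ref{lem: beta} gives
\begin{align*}
\tfrac{B(a_0-3,b_0)}{B(a_0,b_0)} &= \tfrac{(-\aaa 1 - \aaa 2 + 1)_3}{(-\aaa 1)_3},
& \tfrac{B(a_0-2,b_0-1)}{B(a_0,b_0)} &= -\tfrac{(-\aaa 1 - \aaa 2 + 1)_3}{(-\aaa 1)_2 (-\aaa 2)},\\
\tfrac{B(a_0-1,b_0-2)}{B(a_0,b_0)} &= \tfrac{(-\aaa 1 - \aaa 2 + 1)_3}{(-\aaa 1)(-\aaa 2)_2},
& \tfrac{B(a_0,b_0-3)}{B(a_0,b_0)} &= -\tfrac{(-\aaa 1 - \aaa 2 + 1)_3}{(-\aaa 2)_3}.
\end{align*}
Multiplying each by the universal factor $C(k_1,k_2)(-\aaa 1)_3 (-\aaa 2)_3 / (-\aaa 1 - \aaa 2 + 1)_3$, telescoping cancellations between the numerators and denominators of the Pochhammer-type products collapse these four quantities onto exactly the four coefficients (signs included) appearing in the definition of $\W$. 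This establishes the second identity.

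The computation is entirely mechanical; the only care required is the sign bookkeeping (each application of $B(a, b-1) = -\tfrac{a+b+1}{b} B(a,b)$ contributes a minus sign, whereas $B(a-1, b) = \tfrac{a+b+1}{a} B(a,b)$ does not) and the matching of the falling-factorial products. The normalisation constant $C(k_1,k_2)$ in the definition of $\W$ has been arranged precisely so that, together with the universal factor $(-\aaa 1)_3 (-\aaa 2)_3 / (-\aaa 1 - \aaa 2 + 1)_3$, it cleanly absorbs the Beta reductions across all four site configurations simultaneously.
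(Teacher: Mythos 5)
Your proposal is correct and follows essentially the same route as the paper's proof: expand $\s_i(z)$ over the two sites, integrate term by term into Beta integrals $B(a_0-p,b_0-q)$, and reduce everything to $B(a_0,b_0)$ by iterating Lemma \ref{lem: beta}; your explicit ratios and the resulting coefficient matching with $\T$ and $\W$ all check out. The only cosmetic difference is that for the quadratic case the paper first repackages the three Beta integrals into a single common factor times the state $\Xi=-(k_1+k_2+\hc)\T$ before applying Lemma \ref{lem: beta} again, whereas you reduce each term fully and compare coefficients directly.
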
 
\begin{proof} From the definition, \S\ref{sec: S1S2def}, of $\s_1(z)$ we have, using Lemma \ref{lem: beta},
\begin{align}
\int_\gamma \P(z)^{-1/\hc} \s_1(z) dz = -\int_\gamma \P(z)^{-1/\hc} \left( \frac{1}{z} - \frac{1}{z-1} \right) dz \times \Xi
\end{align}
where one finds 
\be \Xi = - k_1 \T^{(2)} - k_2 \T^{(1)} + I^{a(1)}_{-1} I^{a(2)}_{-1} \vackk = - (k_1+k_2+\hc) \T. \nn\ee
The second equality is by a short calculation starting from the definition of $\T$ above. By further use of Lemma \ref{lem: beta}, one has the first part. 
Similarly, from the definition of $\s_2(z)$ in \S\ref{sec: S1S2def} we have 
%the right-hand side of the above expression is
\begin{align} \int_{\gamma} \P(z)^{-2/\hc} \s_2(z) dz 
&= 
\ta t_{abc} I^{a(1)}_{-1} I^{b(1)}_{-1} I^{c(1)}_{-1} \vackk B(-\aaa1 -3,-\aaa2) 
\nn\\&\,\,\,\,{}
+ t_{abc} I^{a(1)}_{-1} I^{b(1)}_{-1} I^{c(2)}_{-1} \vackk  B(-\aaa1-2,-\aaa2-1)
\nn \\&\,\,\,\,{}
+ t_{abc} I^{a(1)}_{-1} I^{b(2)}_{-1} I^{c(2)}_{-1} \vackk  B(-\aaa1 -1,-\aaa2-2)
\nn \\&{}
+ \ta t_{abc}  I^{a(2)}_{-1} I^{b(2)}_{-1} I^{c(2)}_{-1} \vackk B(-\aaa1,-\aaa2-3)\nn
%\\&= B(-\aaa1, -\aaa2) (\aaa1+\aaa2+1)_3\nn\\
%&\qquad \bigg(  
%\ta t_{abc} I^{a(1)}_{-1} I^{b(1)}_{-1} I^{c(1)}_{-1} \vackk  \frac{1}{(-\aaa1)_3}
%\nn\\&\,\,\,\,{}
%- t_{abc}  I^{a(1)}_{-1} I^{b(1)}_{-1} I^{c(2)}_{-1} \vackk  \frac{1}{(-\aaa1)_2 (-\aaa2)}
%\nn \\&\,\,\,\,{}
%+ t_{abc}  I^{a(1)}_{-1} I^{b(2)}_{-1} I^{c(2)}_{-1} \vackk \frac{1}{(-\aaa1) (-\aaa2)_2}
%\nn \\&{}
%- \ta t_{abc} I^{a(2)}_{-1} I^{b(2)}_{-1} I^{c(2)}_{-1} \vackk\frac{1}{(-\aaa2)_3} \bigg)\nn\\
%&= \frac{B(-\aaa1, -\aaa2) (-\aaa1-\aaa2+1)_3}{(-\aaa1)_3(-\aaa2)_3} \times \nn\\
%&\quad\qquad\qquad\,\,\,\,{}\bigg(\ta t_{abc} I^{a(1)}_{-1} I^{b(1)}_{-1} I^{c(1)}_{-1} \vackk  \left(-\aaa 2\right) \left(-\aaa 2 - 1\right) \left(-\aaa 2 -2\right) \nn\\
%&\quad\qquad\qquad\,\,\,\,{}- t_{abc} I^{a(1)}_{-1} I^{b(1)}_{-1} I^{c(2)}_{-1} \vackk  \left(-\aaa 1 -2\right) \left(-\aaa 2 - 1\right) \left(-\aaa 2 -2\right) \nn\\
%&\quad\qquad\qquad\,\,\,\,{}+ t_{abc} I^{a(1)}_{-1} I^{b(2)}_{-1} I^{c(2)}_{-1} \vackk  \left(-\aaa 1 - 1\right)\left(-\aaa 1 -2\right) \left(-\aaa 2 - 2\right)  \nn\\
%&\quad\qquad\qquad{}- \ta t_{abc} I^{a(2)}_{-1} I^{b(2)}_{-1} I^{c(2)}_{-1} \vackk  \left(-\aaa 1\right) \left(-\aaa 1 - 1\right)\left(-\aaa 1 -2\right)\bigg)\nn
\end{align}
and the second result follows by repeated use of Lemma \ref{lem: beta}.
\end{proof}

By the coset construction \cite{GKO}, the state $\T\in \VV_0^{\bm k}$ is a conformal vector: it generates a copy of the Virasoro vertex algebra with central charge
\be c = \dim(\slM) \left( \frac{k_1}{k_1+\hc} +\frac{k_2}{k_2+\hc} -\frac{k_1+k_2}{k_1+k_2+\hc} \right).\nn\ee  
That is,
\be \T_{(n)} \T = \delta_{n,0} T (\T) + \delta_{n,1} 2\T + \delta_{n,3} \ha c\vackk \nn\ee
for $n\in \ZZ_{\geq 0}$.

The state $\W$ was constructed in \cite{BBSS} (see equation 2.8 of that paper, where the field $Y[W, u]$ is given). Suppose we specialize further to the case of $M=3$, \emph{i.e.} $\widehat{\mathfrak{sl}}_3$, and choosing  the irreducible module at the marked point $z_2=1$ to have highest weight
\be \lambda_2 = \Lambda_0.\nn\ee 
so that, in particular, $k_2=1$.
It was shown in \cite{BBSS} that in that case the states $\W$ and $\T$ generate a copy of the $W_3$ algebra. That is, one has -- see \emph{e.g.} \cite{BMP} -- 
\be \T_{(n)} \W = \delta_{n,0} T (\W) + \delta_{n,1} 3\W,  \nn\ee
\begin{multline} \W_{(n)} \W = \delta_{n,0}\left( \beta T(\Lambda) + \frac 1{15} T^3(\T) \right)
  + \delta_{n,1} \left( 2 \beta \Lambda + \frac 3{10} T^2(\T) \right) \\
  + \delta_{n,2} T(\T) + \delta_{n,3} 2\T + \delta_{n,5} \ta c \vackk,
\end{multline}
for $n\in \ZZ_{\geq 0}$. Here
\be \Lambda \coloneqq \T_{(-1)}\T - \frac 3{10} T^2(\T),\quad\text{and}\quad
\beta = \frac{16}{22+5c} .\nn\ee

\begin{remark*}
The $W_3$ algebra is known to have a commutative algebra of Quantum Integral of Motions (of $\widehat{\sl}_3$-(m)-KdV, \emph{i.e.} quantum Boussinesq theory) indexed by the exponents of $\widehat{\sl}_3$. The first few integrals of motion $I_1,I_2,I_4,I_5$ can be found in \cite{BHK}.\footnote{Note that $\W_{BBSS} = \sqrt{3\beta} \W_{BHK}$.} The first two are $I_1 = \T_{\fm{0}}$ and $I_2= \W_{\fm{0}}$. By Proposition \ref{prop: TW}, these are proportional to our first two Hamiltonians $\hat Q_1^{\gamma}$ and $\hat Q_2^{\gamma}$. It is natural to think that the higher integrals of motion are likewise the two-point specializations of the (conjectural) higher Gaudin Hamiltonians, as was first conjectured in the $\widehat{\sl}_2$ case in \cite[\S6.4]{FFsolitons}. We have checked that the vacuum eigenvalues of $I_4$ and $I_5$ indeed agree with the eigenvalues $\int_\gamma \P(z)^{-4/3}v_4(z)dz$ and $\int_\gamma \P^{-5/3}(z)v_5(z) dz$ (see \S\ref{sec: op and eval}) predicted in \cite{LVY1}, up to overall factors depending on the remaining weight $\lambda_1$ only through the level $k_1$. The analogous check also works for $I_3$ and $I_5$ in the $\widehat{\sl}_2$ case.
\end{remark*}

\providecommand{\bysame}{\leavevmode\hbox to3em{\hrulefill}\thinspace}
\providecommand{\MR}{\relax\ifhmode\unskip\space\fi MR }
% \MRhref is called by the amsart/book/proc definition of \MR.
\providecommand{\MRhref}[2]{%
  \href{http://www.ams.org/mathscinet-getitem?mr=#1}{#2}
}
\providecommand{\href}[2]{#2}

%\bibliography{bibliography}
%\bibliographystyle{amsalpha}
\end{document}